\newtheorem{thm}{Theorem}[section]
\newtheorem{df}[thm]{Definition}
\newtheorem{lem}[thm]{Lemma}
\newtheorem{cor}[thm]{Corollary}
\newtheorem{pro}[thm]{Proposition}
\newenvironment {proof} {\noindent{\em Proof.}}{\hspace*{\fill}$\Box$\par\vspace{4mm}}
\title{Some upper bounds for the $3$-proper \\
index of graphs\footnote{Supported by NSFC No.11371205 and 11531011, ``973" program No.2013CB834204, and PCSIRT.}}
\author{{\small Hong Chang, Xueliang Li, Zhongmei Qin}\\
{\small  Center for Combinatorics and LPMC}\\
{\small Nankai University, Tianjin 300071, P.R. China}\\
{\small Email: changh@mail.nankai.edu.cn, lxl@nankai.edu.cn, qinzhongmei90@163.com}\\
}
\date{}
\begin{document}
\maketitle
\begin{abstract}
A tree $T$ in an edge-colored graph is a {\it proper tree} if no two adjacent edges of $T$ receive the same color. Let $G$ be a connected graph of order $n$ and $k$ be a fixed integer with $2\le k\le n$. For a vertex subset $S \subseteq V(G)$ with $\left|S\right| \ge 2$, a tree containing all the vertices of $S$ in $G$ is called an $S$-tree. An edge-coloring of $G$ is called a \emph{$k$-proper coloring} if for every $k$-subset $S$ of $V(G)$, there exists a proper $S$-tree in $G$. For a connected graph $G$, the \emph{$k$-proper index} of $G$, denoted by $px_k(G)$, is the smallest number of colors that are needed in a $k$-proper coloring of $G$. In this paper, we show that for every connected graph $G$ of order $n$ and minimum degree $\delta \geq 3$, $px_{3}(G)\le n\frac{\ln(\delta+1)}{\delta+1}(1+o_{\delta}(1))+2$. We also prove that for every connected graph $G$ with minimum degree at least $3$, $px_{3}(G) \le px_{3}(G[D])+3$ when $D$ is a connected $3$-way dominating set of $G$ and $px_{3}(G) \le px_{3}(G[D])+1$ when $D$ is a connected $3$-dominating set of $G$. In addition, we obtain tight upper bounds of the 3-proper index for two special graph classes: threshold graphs and chain graphs. Finally, we prove that $px_3(G) \le \lfloor\frac{n}{2}\rfloor$ for any 2-connected graphs with at least four vertices.\\[2mm]
\textbf{Keywords:} edge-coloring; proper tree; $3$-proper index; dominating set; ear-decomposition.\\
\textbf{AMS subject classification 2010:} 05C15, 05C40.\\
\end{abstract}

\section{Introduction}

All graphs in this paper are undirected, finite and simple. We follow \cite{BM} for graph theoretical notation and terminology not described here. Let $G$ be a graph, we use $V(G), E(G), |G|, \Delta(G)$ and $\delta(G)$ to denote the vertex set, edge set, order (number of vertices), maximum degree and minimum degree of $G$, respectively. For $D \subseteq V(G)$, let $\overline{D}=V(G)\backslash D$, and let $G[D]$ denote the subgraph of $G$ induced from $D$. For $v \in V(G)$, let $N(v)$ denote the set of neighbors of $v$ in $G$. For two disjoint subsets $X$ and $Y$ of $V(G)$, $E[X,Y]$ denotes the set of edges of $G$ between $X$ and $Y$. The \emph{join} of two graphs $G$ and $H$, denoted by $G \vee H$, is the graph obtained from a disjoint union of $G$ and $H$ by adding edges joining every vertex of $G$ to every vertex of $H$.

Let $G$ be a nontrivial connected graph with an associated {\it edge-coloring} $c : E(G)\rightarrow \{1, 2, \ldots, t\}$, $t \in \mathbb{N}$, where adjacent edges may have the same color. If adjacent edges of $G$ are assigned different colors by $c$, then $c$ is a {\it proper (edge-)coloring}. For a graph $G$, the minimum number of colors needed in a proper coloring of $G$ is referred to as the {\it edge-chromatic number} of $G$ and denoted by $\chi'(G)$. A path in an edge-colored graph $G$ is said to be a {\it rainbow path} if no two edges on the path have the same color. The graph $G$ is called {\it rainbow connected} if for any two vertices there is a rainbow path of $G$ connecting them. An edge-coloring of a connected graph is a {\it rainbow connecting coloring} if it makes the graph rainbow connected. For a connected graph $G$, the \emph{rainbow connection number} $rc(G)$ of $G$ is the smallest number of colors that are needed in order to make $G$ rainbow connected.
These concepts of rainbow connection of graphs were introduced by Chartrand et al.~\cite{CJMZ} in 2008. The readers who are interested in this topic can see \cite{LSS, LS} for a survey.

In \cite{COZ}, Chartrand et al. generalized the concept of rainbow connection to rainbow index. A tree $T$ in an edge-colored graph is a {\it rainbow tree} if no two edges of $T$ receive the same color. Let $G$ be a connected graph of order $n$ and $k$ be a fixed integer with $2\le k\le n$. For a vertex subset $S \subseteq V(G)$ with  $\left|S\right| \ge 2$, a tree containing all the vertices of $S$ in $G$ is called an $S$-tree. An edge-coloring of $G$ is called a \emph{$k$-rainbow coloring} if for every $k$-subset $S$ of $V(G)$, there exists a rainbow $S$-tree in $G$. For a connected graph $G$, the \emph{$k$-rainbow index} of $G$, denoted by $rx_k(G)$, is the minimum number of colors that are needed in a $k$-rainbow coloring of $G$. We refer to \cite{CLZ, QXY,CLYZ,LSYZ} for more details.

Motivated by rainbow coloring and proper coloring in graphs, Andrews et al.~\cite{ALLZ} and Borozan et al.~\cite{BFGMMMT} introduced the concept of proper-path coloring. Let $G$ be a nontrivial connected graph with an edge-coloring. A path in $G$ is called a \emph{proper path} if no two adjacent edges of the path are colored with the same color. An edge-coloring of a connected graph $G$ is a \emph{proper-path coloring} if every pair of distinct vertices of $G$ are connected by a proper path in $G$. An edge-colored graph $G$ is {\it proper connected} if any two vertices of $G$ are connected by a proper path. For a connected graph $G$, the {\it proper connection number} of $G$, denoted by $pc(G)$, is defined as the smallest number of colors that are needed in order to make $G$ proper connected. For more details, we refer to \cite{GLQ,LLZ,LWY} and a dynamic survey \cite{LC}.

Inspired by the $k$-rainbow index and the proper-path coloring, Chen et al. \cite{CLL} introduced the concept of $k$-proper index of a connected graph $G$. A tree $T$ in an edge-colored graph is a {\it proper tree} if no two adjacent edges of $T$ receive the same color. Let $G$ be a connected graph of order $n$ and $k$ be a fixed integer with $2\le k\le n$. For a vertex subset $S \subseteq V(G)$ with $\left|S\right| \ge 2$, a tree containing all the vertices of $S$ in $G$ is called an $S$-tree. An edge-coloring of $G$ is called a \emph{$k$-proper coloring} if for every $k$-subset $S$ of $V(G)$, there exists a proper $S$-tree in $G$. In this case, $G$ is called \emph{$k$-proper connected}. For a connected graph, the \emph{$k$-proper index} of $G$, denoted by $px_k(G)$, is defined as the minimum number of colors that are needed in a $k$-proper coloring of $G$. Clearly, when $k=2$, $px_{2}(G)$ is exactly the proper connection number $pc(G)$ of $G$. Hence, we will study $px_{k}(G)$ only for $k$ with $3 \le k\le n$ here. Let $G$ be a nontrivial connected graph of order $n$ and size $m$, it is easy to see that $pc(G) \le px_3(G)\le \cdots \le px_n(G) \le m$.

The rest of this paper is organised as follows. In Section 2, we list some basic definitions and fundamental results on the $k$-proper index of graphs. In Section 3, we study the 3-proper index by using connected 3-way dominating sets and 3-dominating sets. We first show that for every connected graph $G$ with minimum degree at least $3$, $px_{3}(G) \le px_{3}(G[D])+3$ when $D$ is a connected $3$-way dominating set of $G$. Then, we can easily get that for every connected graph $G$ on $n$ vertices with minimum degree $\delta\geq 3$, $px_{3}(G)\le n\frac{\ln(\delta+1)}{\delta+1}(1+o_{\delta}(1))+2$. At last, we show that $px_{3}(G) \le px_{3}(G[D])+1$ when $D$ is a connected $3$-dominating set of $G$. In addition, we obtain the tight upper bounds of the 3-proper index for two special graph classes: threshold graphs and chain graphs. In Section 4, we prove that $px_3(G) \le \lfloor\frac{n}{2}\rfloor$ for any 2-connected graphs with at least four vertices.

\section{Preliminaries}

To begin with this section, we present the following basic concepts.

\begin{df}\label{df1}
Given a graph $G$, a set $D \subseteq V(G)$ is called a dominating set if every vertex of $\overline{D}$ is adjacent to at least one vertex of $D$. Furthermore, if the subgraph $G[D]$ is connected, it is called a connected dominating set of $G$. The domination number $\gamma(G)$ is the number of vertices in a minimum dominating set of $G$. Similarly, the connected dominating number $\gamma_{c}(G)$ is the number of vertices in a minimum connected dominating set of $G$.
\end{df}

\begin{df}\label{df2}
Let $s$ be a positive integer. A dominating set $D$ of $G$ is called an $s$-way dominating set if $d(v)\ge s$ for every vertex $v \in \overline{D}$. In addition, if $G[D]$ is connected, then $D$ is called a connected $s$-way dominating set.
\end{df}

\begin{df}\label{df3}
A set $D \subseteq G$ is called an $s$-dominating set of $G$ if every vertex of $\overline{D}$ is adjacent to at least $s$ distinct vertices of $D$. Furthermore, if $G[D]$ is connected, then $D$ is called a connected $s$-dominating set. Obviously, a (connected) $s$-dominating set is also a (connected) $s$-way dominating set.
\end{df}

\begin{df}\label{df4}
BFS (breadth-first search) is an algorithm for traversing or searching tree or graph data structures. It starts at the tree root and explores the neighbor vertices first, before moving to the next level neighbors. A BFS-tree (breadth-first search tree) is a spanning rooted tree returned by BFS. Let $T$ be a BFS-tree with root $r$. For a vertex $v$, the level of $v$ is the length of the unique $\{v, r\}$-path in $T$, the ancestors of $v$ are the vertices on the unique $\{v, r\}$-path in $T$, the parent of $v$ is its neighbor on the unique $\{v,r\}$-path in $T$. Its other neighbors are called the children of $v$. The siblings of $v$ are the vertices in the same level as $v$. The left (resp. right) siblings of $v$ are the siblings of $v$ visited before (resp. after) $v$ in BFS.
\end{df}

\noindent\textbf{Remark:} BFS-trees have a nice property: every edge of the graph joins vertices on the same level or consecutive levels. It is not possible for an edge to skip a level. Thus, a neighbor of a vertex $v$ has three possibilities: (1) a sibling of $v$; (2) the parent of $v$ or a right sibling of the parent of $v$; (3) a child of $v$ or a left sibling of the children of $v$.

Next, we state some fundamental results on the $k$-proper index of graphs which will be used in the sequel.

\begin{pro}\label{pro2.1}\cite{CLL}
If $G$ is a nontrivial connected graph of order $n \ge 3$ and $H$ is a connected spanning subgraph of $G$, then $px_{k}(G)\le px_{k}(H)$ for each integer $k$ with $3\le k\le n$. In
particular, $px_k(G)\le px_k(T)$ for every spanning tree $T$ of $G$.
\end{pro}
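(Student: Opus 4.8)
The plan is to exploit the fact that a connected spanning subgraph $H$ of $G$ shares the same vertex set with $G$ while using only a subset of its edges, so any coloring that makes $H$ $k$-proper connected can be transported to $G$ at no extra cost in colors. First I would fix an optimal $k$-proper coloring $c_H \colon E(H) \to \{1,2,\dots,px_k(H)\}$ of $H$. I would then define an edge-coloring $c_G$ of $G$ by setting $c_G(e) = c_H(e)$ for every $e \in E(H)$ and assigning to each edge $e \in E(G) \setminus E(H)$ an arbitrary color already in use, say color $1$. By construction $c_G$ uses exactly $px_k(H)$ colors.

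Next I would verify that $c_G$ is a $k$-proper coloring of $G$. Let $S$ be any $k$-subset of $V(G)$. Since $V(H) = V(G)$, the set $S$ is also a $k$-subset of $V(H)$, and because $c_H$ is $k$-proper there is a proper $S$-tree $T$ in $H$. As $E(H) \subseteq E(G)$ and $c_G$ agrees with $c_H$ on $E(H)$, the tree $T$ is simultaneously a subgraph of $G$ whose edges receive exactly the colors that $c_H$ assigned to them. Hence $T$ is still an $S$-tree in $G$ in which no two adjacent edges share a color, that is, a proper $S$-tree in $G$ under $c_G$. Since $S$ was arbitrary, $c_G$ is a $k$-proper coloring of $G$, and therefore $px_k(G) \le px_k(H)$.

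The ``in particular'' clause then follows immediately: every spanning tree $T$ of $G$ is in particular a connected spanning subgraph of $G$, so applying the inequality just established with $H = T$ yields $px_k(G) \le px_k(T)$.

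As for difficulties, this is essentially a routine monotonicity observation and there is no serious obstacle. The only points requiring care are to confirm that extending $c_H$ to the additional edges of $G$ introduces no new color, and that the transported tree $T$ remains both an $S$-tree and proper under $c_G$; both hold precisely because all edges of $T$ lie in $H$ and their colors are left untouched by the extension.
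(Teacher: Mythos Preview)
Your argument is correct and is exactly the standard monotonicity argument one would expect. Note that the paper does not actually supply a proof of this proposition; it is quoted from \cite{CLL} and stated without proof, so there is no ``paper's own proof'' to compare against. Your write-up would serve perfectly well as the omitted justification.
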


\begin{pro}\label{pro2.3}\cite{CLL}
If $T$ is a nontrivial tree of order $n\ge 3$, then $px_{k}(T)=\chi^{'}(G)=\Delta(G)$ for each integer $k$ with $3\le k\le n$.
\end{pro}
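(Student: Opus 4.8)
The plan is to establish the two bounds $px_k(T)\le \Delta(T)$ and $px_k(T)\ge \Delta(T)$ separately, and to invoke the classical fact that a tree, being bipartite, satisfies $\chi'(T)=\Delta(T)$ by K\"onig's edge-coloring theorem. The structural observation that drives both directions is that in a tree $T$ the subtree spanning any vertex set $S$ is unique and minimal (call it the Steiner tree $T_S$), and every $S$-tree necessarily contains $T_S$; consequently a given edge-coloring admits a proper $S$-tree if and only if $T_S$ itself is properly colored, since enlarging an already-improper subtree cannot repair it.

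For the upper bound, I would fix a proper edge-coloring of $T$ with $\chi'(T)=\Delta(T)$ colors. Because no two adjacent edges then share a color, $T$ is itself a proper tree, and since $T$ contains every vertex it serves as a proper $S$-tree for every $k$-subset $S$ simultaneously. Hence this coloring is $k$-proper and $px_k(T)\le \Delta(T)$.

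For the lower bound, let $v$ be a vertex of maximum degree with neighbors $u_1,\dots,u_\Delta$, and suppose $c$ is any $k$-proper coloring. Fix a pair $i\ne j$ and choose a $k$-subset $S$ with $\{v,u_i,u_j\}\subseteq S$; this is possible because $n\ge k\ge 3$, so $S$ can be completed by $k-3$ further vertices. Since $u_i$ and $u_j$ are adjacent to $v$, the unique $u_i$--$v$ and $v$--$u_j$ paths in $T$ are the single edges $vu_i$ and $vu_j$, so both edges lie in $T_S$ and therefore in every $S$-tree. As $c$ admits a proper $S$-tree, these two adjacent edges must receive distinct colors, i.e. $c(vu_i)\ne c(vu_j)$. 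Ranging over all pairs forces the $\Delta$ edges incident with $v$ to be pairwise differently colored, so $c$ uses at least $\Delta(T)$ colors and $px_k(T)\ge \Delta(T)$.

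The one point that genuinely uses the tree hypothesis, and which I would emphasize, is the impossibility of ``rerouting'' around $v$: in a general graph one might connect $S$ while using at most one edge at $v$, but in a tree the path between any two vertices is forced, so once $v,u_i,u_j\in S$ the two edges at $v$ are unavoidable in every $S$-tree. This is the crux; the remaining steps (existence of the $k$-subset, and the identity $\chi'(T)=\Delta(T)$) are routine.
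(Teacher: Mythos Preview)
The paper does not actually supply a proof of this proposition; it is quoted from \cite{CLL} without argument. Your proof is correct and is the natural one: the upper bound holds because a proper edge-coloring of $T$ with $\Delta(T)$ colors (K\"onig) turns the entire tree into a proper $S$-tree for every $S$, and the lower bound holds because in a tree the unique $u_i$--$u_j$ path passes through $v$, so any $S$-tree with $\{v,u_i,u_j\}\subseteq S$ must contain both edges $vu_i,vu_j$, forcing them to receive distinct colors. Your remark that every $S$-tree in $T$ contains the Steiner subtree $T_S$, and that an improper $T_S$ cannot be ``repaired'' by enlarging it, is exactly the point that makes the lower bound work and is well identified.
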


Propositions \ref{pro2.1} and \ref{pro2.3} provide an upper bound of the $k$-proper index for a graph.

\begin{pro}\label{pro2.4}\cite{CLL}
Let $G$ be a nontrivial connected graph of order $n\ge3$. Then, $2 \le px_{3}(G)\le \ldots \le px_{n}(G)\le$ $\min\{\Delta(T): T$ is a spanning tree of $G \}$.
\end{pro}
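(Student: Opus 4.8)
The plan is to verify the chain in three independent pieces: the leftmost inequality $2 \le px_3(G)$, the monotonicity $px_k(G) \le px_{k+1}(G)$ for each $k$ with $3 \le k \le n-1$, and the rightmost inequality $px_n(G) \le \min\{\Delta(T): T \text{ is a spanning tree of } G\}$.

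For the lower bound, I would argue that a single color can never yield a $3$-proper coloring. Fix any $3$-subset $S \subseteq V(G)$. Every $S$-tree contains at least three vertices, hence at least two edges, so it has a vertex of degree at least two and thus carries two adjacent edges. Under a coloring with only one color these two adjacent edges receive the same color, so no $S$-tree is proper. Therefore at least two colors are required and $px_3(G) \ge 2$.

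For the monotonicity, I would show that every $(k+1)$-proper coloring $c$ of $G$ is automatically a $k$-proper coloring, from which $px_k(G) \le px_{k+1}(G)$ follows at once. Given any $k$-subset $S$, since $k \le n-1$ we may pick a vertex $v \in V(G) \setminus S$ and set $S' = S \cup \{v\}$, a $(k+1)$-subset. By hypothesis there is a proper $S'$-tree $T'$; as $S \subseteq S'$, the tree $T'$ contains every vertex of $S$ and is proper, so $T'$ is itself a proper $S$-tree. Hence $c$ is $k$-proper, and taking minima over colorings gives the desired inequality for every such $k$.

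For the upper bound, I would combine Propositions \ref{pro2.1} and \ref{pro2.3}. Let $T^*$ be a spanning tree of $G$ with $\Delta(T^*) = \min\{\Delta(T): T \text{ is a spanning tree of } G\}$. Proposition \ref{pro2.1} gives $px_n(G) \le px_n(T^*)$, and Proposition \ref{pro2.3} gives $px_n(T^*) = \Delta(T^*)$. Combining these yields $px_n(G) \le \Delta(T^*) = \min\{\Delta(T): T \text{ is a spanning tree of } G\}$, which closes the chain. The argument is essentially a matter of assembling the earlier results together with two short first-principles observations, so no single step is a serious obstacle; the only point requiring slight care is the monotonicity, where one must confirm that a proper tree spanning the superset $S'$ genuinely serves as a proper tree for the subset $S$, which it does, since both containing the required vertices and being proper are preserved when passing to a subset.
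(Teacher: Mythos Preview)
The paper does not give its own proof of this proposition: it is quoted from \cite{CLL}, accompanied only by the remark that ``Propositions~\ref{pro2.1} and~\ref{pro2.3} provide an upper bound of the $k$-proper index for a graph.'' Your argument is correct, and your handling of the upper bound is precisely the combination the paper points to. Your treatments of the lower bound $px_3(G)\ge 2$ and of the monotonicity $px_k(G)\le px_{k+1}(G)$ are the natural first-principles arguments and are sound; in particular, your observation that a proper $S'$-tree is automatically a proper $S$-tree whenever $S\subseteq S'$ is exactly what is needed, since the definition of an $S$-tree requires only containment of $S$, not that $S$ be its full vertex set.
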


A \emph{Hamiltonian path} in a graph $G$ is a path containing every vertex of $G$ and a graph having a Hamiltonian path is a \emph{traceable graph}. The following is an immediate consequence of Proposition \ref{pro2.4}.

\begin{cor}\label{cor2.6}\cite{CLL}
If $G$ is a traceable graph of order $n$, then for each integer $k$ with $3\le k\le n$, $px_{k}(G)=2$.
\end{cor}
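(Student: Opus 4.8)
The plan is to derive this directly from Proposition~\ref{pro2.4}, since traceability hands us exactly the spanning tree that minimizes the relevant maximum degree. The key observation is that a Hamiltonian path $P = v_1v_2\cdots v_n$ of $G$ is a spanning tree of $G$: it is connected, acyclic, and contains every vertex. Since $n \ge 3$ (forced by $3 \le k \le n$), its two endpoints $v_1, v_n$ have degree $1$ while every internal vertex has degree $2$, so $\Delta(P) = 2$. Consequently $\min\{\Delta(T): T \text{ is a spanning tree of } G\} \le \Delta(P) = 2$.

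Feeding this into Proposition~\ref{pro2.4} gives the full chain $2 \le px_3(G) \le \cdots \le px_n(G) \le \min\{\Delta(T): T \text{ spanning tree}\} \le 2$. The left inequality $px_3(G) \ge 2$ and the right bound $px_n(G) \le 2$ squeeze every term in between, so $px_k(G) = 2$ for each $k$ with $3 \le k \le n$. Thus the entire argument reduces to recognizing the Hamiltonian path as a degree-$2$ spanning tree and invoking the already-established sandwich.

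For a self-contained check that does not lean on Proposition~\ref{pro2.4}, I would alternatively exhibit an explicit $2$-coloring: assign edge $v_iv_{i+1}$ of $P$ the color $1$ when $i$ is odd and the color $2$ when $i$ is even, and color all remaining edges of $G$ arbitrarily. Under this alternating coloring every subpath of $P$ is a proper path, so for any vertex subset $S$ the minimal subpath of $P$ containing $S$ (equivalently, $P$ itself) is a proper $S$-tree; hence two colors suffice for every $k$-subset, giving $px_k(G) \le 2$, and $px_k(G) \ge px_3(G) \ge 2$ finishes it. Honestly there is no real obstacle here: the statement is an immediate corollary, and the only points needing a word of care are confirming $\Delta(P) = 2$ rather than $1$ (which is why $n \ge 3$ matters) and noting that the lower bound $2$ together with the monotonicity $px_3 \le \cdots \le px_n$ forces equality throughout.
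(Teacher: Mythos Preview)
Your argument is correct and matches the paper's own treatment: the paper simply states that this corollary is an immediate consequence of Proposition~\ref{pro2.4}, and your derivation via the Hamiltonian path as a spanning tree with $\Delta(P)=2$ is exactly that immediate consequence. The additional explicit $2$-coloring you supply is a nice redundancy check but is not needed.
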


Obviously, for any integer $k$ with $3 \le k \le n$, $px_{k}(P_{n})=px_{k}(C_{n})=px_{k}(W_{n})=px_{k}(K_{n})=px_{k}(K_{n,n})=2$.

\begin{lem}\label{lem2.7}
If $G$ is a connected graph with order $n_{G}$ and $H$ is a connected subgraph of $G$ with order $n_{H}$, then for each integer $k$ with $3\le k\le n_{H}$, we have $px_{k}(G) \le px_{k}(H)+n_{G}-n_{H}$; for each integer $k$ with $n_{H}\le k\le n_{G}$, we have $px_{k}(G) \le px_{n_{H}}(H)+n_{G}-n_{H}$.
\end{lem}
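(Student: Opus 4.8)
The plan is to build a connected spanning subgraph $H'$ of $G$ whose structure is a copy of $H$ together with a set of extra edges attaching each vertex of $V(G)\setminus V(H)$ to $H$, and then color it using $px_k(H)$ colors on the $H$-part plus $n_G-n_H$ fresh colors, one for each attaching edge. More precisely, since $G$ is connected and $H$ is a connected subgraph, I would order the vertices of $\overline{V(H)}$ as $u_1,\dots,u_{n_G-n_H}$ so that each $u_i$ has a neighbor in $V(H)\cup\{u_1,\dots,u_{i-1}\}$ (possible by running a BFS/DFS from $H$); pick one such neighbor for each $u_i$ and call that edge $e_i$. The subgraph $H'$ consisting of $H$ together with the edges $e_1,\dots,e_{n_G-n_H}$ is a connected spanning subgraph of $G$. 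By Proposition~\ref{pro2.1} it suffices to show $px_k(H')\le px_k(H)+n_G-n_H$ (resp.\ $px_k(H')\le px_{n_H}(H)+n_G-n_H$ in the second range).

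For the coloring: fix an optimal $k$-proper coloring $c_0$ of $H$ (using $px_k(H)$ colors when $k\le n_H$, or an optimal $n_H$-proper coloring using $px_{n_H}(H)$ colors when $k\ge n_H$). Extend $c_0$ to $H'$ by assigning edge $e_i$ a brand-new color $\alpha_i$, with all the $\alpha_i$ distinct from each other and from the colors of $c_0$. This uses the claimed number of colors. Now I must verify this is a $k$-proper coloring of $H'$, i.e.\ every $k$-subset $S\subseteq V(H')$ has a proper $S$-tree.

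The key step is the tree construction. Given $S$, let $S_H=S\cap V(H)$ and let $S'=S\setminus V(H)$. First, if $|S_H|\ge 2$: take a proper tree $T_H$ in $H$ for the set $S_H'$, where $S_H'$ is obtained from $S_H$ by adding, for each $u_i\in S'$, the endpoint in $V(H)$ of the path from $u_i$ back into $H$ through the edges $e_j$ (in the generic case where each $u_i$ attaches directly to $H$, this is just the $H$-endpoint of $e_i$); we need $|S_H'|\le k$ when $k\le n_H$, which may fail, so instead we take a proper tree for a set of size exactly $\min\{|S_H'|,\,\text{something}\}$ — here I expect to pad $S_H'$ up to a $k$-subset of $V(H)$ (legitimate since $n_H\ge k$) and invoke the $k$-proper (resp.\ $n_H$-proper) coloring of $H$. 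Then attach each $u_i\in S'$ to $T_H$ via its chain of $e_j$-edges; since each $e_i$ has a unique fresh color $\alpha_i$, no two adjacent edges among the attached edges or between an attached edge and $T_H$ share a color, so the resulting tree is proper and contains $S$. The boundary cases — when $|S_H|\le 1$, so that $S$ is almost entirely outside $H$ — are handled the same way: route everything through the attaching edges to a small subtree of $H$ (a single vertex or a proper path between two vertices of $H$, which exists since $px_k(H)\ge 2$ guarantees enough structure, or trivially), again using that all the $\alpha_i$ are distinct.

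The main obstacle is the bookkeeping in the padding argument: when $k\le n_H$ the set $S_H'$ may have more than $k$ elements (if several $u_i$'s attach to distinct vertices of $H$ and those, together with $S_H$, exceed $k$), so one cannot directly ask $c_0$ for a proper $S_H'$-tree. The fix is to observe that we never need $c_0$ to connect more than $n_H$ prescribed vertices, and when $|S_H'|>k$ we should instead argue level-by-level or use Lemma~\ref{lem2.7}'s own second clause recursively / use that a proper tree spanning all of $V(H)$-restricted-structure exists — cleanest is: whenever $|S_H'|\le k$ use the $k$-proper coloring directly, and whenever $k<|S_H'|\le n_H$ use that $c_0$ is in fact $n_H$-proper (in the first range $k\le n_H$ we may assume $c_0$ is chosen to be an optimal $k$-proper coloring, and $px_k(H)$-color colorings that are $k$-proper need not be $n_H$-proper — so here I would instead take $T_H$ to be a proper tree for any $k$-subset containing as much of $S_H'$ as fits, then handle the overflow vertices of $S_H'$ by noting they are themselves $H$-endpoints of fresh-colored edges $e_i$ and can dangle off $T_H$ through those very edges). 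Getting this attachment-ordering and overflow handling exactly right, while keeping the tree a tree (no cycles) and proper at every internal vertex, is the delicate part; the rest is routine.
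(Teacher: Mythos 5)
Your overall construction is sound and is essentially a concrete version of the paper's proof: the paper contracts $H$ to a single vertex, colors the quotient graph with $n_G-n_H$ colors via Proposition \ref{pro2.4}, lifts that coloring back to the edges outside $H$, and puts $px_{k}(H)$ (resp.\ $px_{n_H}(H)$) fresh colors on $H$; your rainbow-colored attachment forest plays exactly the role of that contracted spanning tree, and if anything is cleaner. However, as written your proof does not close: the entire final paragraph is devoted to an ``overflow'' case ($|S_H'|>k$) that you never resolve. You propose three mutually inconsistent fixes, one of which (``use that $c_0$ is in fact $n_H$-proper'') you yourself correctly observe is unjustified, and you end by conceding that the delicate part is still open. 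That is a genuine gap in the writeup.

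The point you are missing is that the overflow case is vacuous. Only the vertices of $S\setminus V(H)$ need to be routed into $H$, and each such vertex $u_i$ contributes at most one attachment vertex $r(u_i)\in V(H)$, namely the unique vertex of $V(H)$ reached by following its chain of forest edges: each component of the attachment forest meets $V(H)$ in exactly one vertex, its root. Hence $|S_H'|\le |S\cap V(H)|+|S\setminus V(H)|=k\le n_H$, so you may always pad $S_H'$ to a $k$-subset of $V(H)$ and invoke the $k$-proper coloring of $H$ (in the second range, pad to an $n_H$-subset and use the $n_H$-proper coloring). The attached forest paths carry pairwise distinct fresh colors, so properness inside the forest and at the junctions $r(u_i)$ is automatic, and the union with $T_H$ is a tree because each forest component meets $V(H)$, and hence $T_H$, in a single vertex. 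With this one counting observation inserted and the speculative overflow discussion deleted, your argument is complete and matches the paper's in both spirit and bound.
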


\begin{proof} Let $G'$ be a graph obtained from $G$ by contracting $H$ to a single vertex. Then, $G'$ is a connected graph of order $n_{G}-n_{H}+1$. Thus, by Proposition \ref{pro2.4}, $px_{k'}(G') \le n_{G}-n_{H}$ for each integer $k'$ with $3\le k' \le n_{G}-n_{H}+1$. Given an edge-coloring of $G'$ with $n_{G}-n_{H}$ colors such that $G'$ is $k^{'}$-proper connected ($3\le k' \le n_{G}-n_{H}+1$). Now, go back to $G$, and color each edge outside $H$ with the color of the corresponding edge in $G'$. For $H$, if $3 \le k \le n_H$, then we assign $px_{k}(H)$ new colors to the edges of $H$ such that $H$ is $k$-proper connected; if $n_H \le k \le n_G$, then we assign $px_{n_H}(H)$ new colors to the edges of $H$ such that $H$ is $n_H$-proper connected.  The resulting edge-coloring makes $G$ $k$-proper connected. Therefore, for each integer $k$ with $3\le k\le n_{H}$, we have $px_{k}(G) \le px_{k}(H)+n_{G}-n_{H}$; for each integer $k$ with $n_{H}\le k\le n_{G}$, we have $px_{k}(G) \le px_{n_{H}}(H)+n_{G}-n_{H}$. This completes the proof.
\end{proof}

\section{The $3$-proper index and connected dominating sets}

In this section, we give some upper bounds of the $3$-proper index for a graph $G$ by using connected $3$-way dominating sets and $3$-dominating sets.

Let $G$ be a graph, $D \subseteq V(G)$, and $v \in \overline{D}$. We call a path $P=v_0v_1\cdots v_{t}$ a \emph{$v-D$ path} if $v_0=v$ and $V(P) \cap D = \{v_t\}$. Two or more paths are called \emph{internally disjoint} if none of them contains an inner vertex of another. If $P$ is edge-colored, then we denote by $end(P)$ the color of the last edge $v_{t-1}v_t$. Now we give our main results.

\begin{thm}\label{thm4.6}
If $D$ is a connected $3$-way dominating set of a connected graph $G$, then $px_{3}(G) \le px_{3}(G[D])+3$. Moreover, this bound is tight.
\end{thm}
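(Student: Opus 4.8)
The plan is to start from an optimal edge-coloring of $G[D]$ using $px_3(G[D])$ colors, introduce a small number of extra colors to handle the vertices outside $D$, and then verify that every $3$-subset $S$ of $V(G)$ admits a proper $S$-tree. First I would fix a connected $3$-way dominating set $D$, take a $3$-proper coloring $c_0$ of $G[D]$, and for each vertex $v \in \overline{D}$ choose a private neighbor $f(v) \in D$ (using that $D$ is dominating). The key device is that since $d_G(v) \ge 3$ for each $v \in \overline{D}$, we may select, for each such $v$, two additional internally disjoint $v$--$D$ paths (possibly of length $1$, i.e. edges to $D$) besides the edge $vf(v)$; these paths give us ``escape routes'' from $v$ into $D$ that can be made proper independently of $c_0$. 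I would color all the ``anchor'' edges $vf(v)$ with one new color, say color $\alpha$, and use the BFS-remark structure on $\overline{D}$ (or a direct ad hoc argument on the small number of edges incident to $\overline{D}$) together with two further new colors $\beta,\gamma$ to color the remaining edges incident to $\overline{D}$ so that from every $v \in \overline{D}$ there is a proper path into $D$ whose last edge avoids any prescribed single color — this ``two-option'' flexibility at each $v$ is exactly what lets us splice $v$'s path onto a tree inside $D$ without a color clash.

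The main case analysis is then over how a given $3$-set $S = \{x,y,z\}$ meets $D$. If $S \subseteq D$, a proper $S$-tree exists already in $G[D]$ by definition of $c_0$. If exactly one or two vertices of $S$ lie outside $D$, I would take a proper $S'$-tree $T'$ in $G[D]$ for the appropriate set $S'$ obtained by replacing each outside vertex $v$ by a suitable vertex of $D$ reachable from $v$ (e.g. $f(v)$ or the endpoint of one of the two alternative $v$--$D$ paths), then attach the $v$--$D$ path to $T'$; the two-option flexibility guarantees we can pick the alternative whose terminal color differs from the color $T'$ uses at the attachment vertex, so properness is preserved. When two outside vertices attach at the same vertex of $D$, or when an outside vertex attaches at a leaf of $T'$ that is itself the image of another outside vertex, one has to be slightly careful and possibly reroute one of the paths or re-root the relevant part of $T'$ — this bookkeeping is where the three extra colors (rather than one or two) are spent, and it is the step I expect to be the main obstacle: ensuring that for every configuration of $S$ one can simultaneously realize all the needed proper attachments with only $3$ new colors.

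For the moreover clause, I would exhibit a family of graphs achieving equality. A natural candidate is to take a graph $H$ with $px_3(H)$ known and attach pendant-like gadgets forcing each outside vertex to have degree exactly $3$ while making $3$ genuinely new colors unavoidable near $\overline{D}$ — for instance a suitable subdivision or a graph built on a star/complete bipartite core where $D = V(H)$ is forced to be the (unique up to symmetry) connected $3$-way dominating set, and where any $3$-proper coloring of $G$ restricted to $\overline{D}$ already needs three colors for the pendant structure beyond the $px_3(G[D])$ colors used inside. I would then check that $px_3(G) = px_3(G[D]) + 3$ for this family by a direct lower-bound argument counting colors forced on the edges incident to $\overline{D}$, together with the upper bound just proved.
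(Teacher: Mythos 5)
Your high-level plan coincides with the paper's: keep an optimal coloring of $G[D]$ on fresh colors, spend three new colors on the edges meeting $\overline{D}$, and split into cases according to $|S\cap D|$. However, the proposal has a genuine gap exactly where you yourself flag ``the main obstacle.'' The entire technical content of the theorem is the construction of a $3$-coloring of $E[D,\overline{D}]\cup E(G[\overline{D}])$ under which \emph{every} vertex $v\in\overline{D}$ admits \emph{three} internally disjoint proper $v$--$D$ paths whose last edges have pairwise \emph{distinct} colors (the paper calls such a vertex \emph{good}). Your ``two-option flexibility'' (a proper escape path whose last edge avoids any one prescribed color) is not enough: when all three vertices $u,v,w$ of $S$ lie in the same component of $G-D$, their escape paths can share internal vertices and feet, and the paper's verification needs all three pairwise-distinct terminal colors (e.g.\ choices of the form $P_1^u\cup P_2^v\cup P_3^w$) to avoid clashes both at shared feet in $D$ and at shared internal vertices in $\overline{D}$. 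Moreover, your opening move of coloring \emph{all} anchor edges $vf(v)$ with a single color $\alpha$ immediately creates two adjacent $\alpha$-edges whenever two outside vertices share a foot, so even the two-vertex case already fails without the very rerouting machinery you defer. The paper resolves this by treating the components of $G-D$ of order $1$, $2$, and $\ge 3$ separately, running a BFS-tree on each large component with a level-mod-$3$ coloring scheme, and then repairing the leaves (via stars among siblings and a four-case analysis of the extra edge $ww'$ guaranteed by $d(w)\ge 3$); none of this is supplied or replaceable by ``a direct ad hoc argument.''

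The tightness claim is likewise not established: you describe what a candidate family should look like but do not exhibit one or prove the matching lower bound. The paper's witness is simple and worth knowing: take $p\ge 3$ complete graphs $K_{i_1},\dots,K_{i_p}$ with $i_j\ge 4$ sharing a single common vertex $v_0$; then $D=\{v_0\}$ is a connected $3$-way dominating set with $px_3(G[D])=0$, and one checks directly that $px_3(G)=3$.
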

\begin{proof}
Let $D$ be a connected 3-way dominating set of a connected graph $G$. For $v \in \overline{D}$, its neighbors in $D$ are called the {\it feet} of $v$, and the corresponding edges are called the {\it legs} of $v$. A set of proper $v-D$ paths $\{P_1, P_2,\ldots, P_{\ell}\}$ are called {\it strong-proper} if $end(P_i)\ne end(P_j) \ (1 \le i < j \le \ell)$. For a vertex $v$ in $\overline{D}$, we call it {\it good} if there are three internally disjoint strong-proper $v-D$ paths. Otherwise, we call $v$ {\it bad}. Denote by $c(e)$ the color of an edge $e$. Let $T$ be a rooted BFS-tree. Pick a vertex $v$ in $T$, and let $\ell(v)$ be the level of $v$, $p(v)$ the parent of $v$, $ch(v)$ the child of $v$, $\alpha(v)$ the ancestor of $v$ in the first level.

We now review the ideas in the proof. At first, we color the edges in $E[D,\overline{D}]$ and $E(G[\overline{D}])$ with three colors from $\{1,2,3\}$ such that every vertex $v$ of $\overline{D}$ is good. Then, we extend the coloring to the whole graph by coloring the edges in $G[D]$ with $px_{3}(G[D])$ fresh colors. Finally, we prove this edge-coloring is a 3-proper coloring of $G$.

Assume that $A_1,\ldots, A_s$, $B_1,\ldots, B_t$, $C_1,\ldots, C_q$ are the connected components of the subgraph $G-D$ such that $\left|A_i\right|=1$ $(1\leq i\leq s)$, $\left|B_j\right|=2$ $(1\leq j\leq t)$ and $\left|C_k\right|\geq 3$ $(1\leq k\leq q)$, where $s$, $t$ and $q$ are nonnegative integers, and $s=0$ or $t=0$ or $q=0$ means that there is no $A_i$-component or $B_j$-component or $C_k$-component.

For each $A_i$ $(1\leq i\leq s)$, let $v$ be an isolated vertex of $A_i$. Then, $v$ has at least three legs, we color one of them with 1, one of them with 2, and all the others with 3. Thus, $v$ is good.

For each $B_j$ $(1\leq j\leq t)$, let $uv$ be the edge of $B_j$. Then, $u$ has at least two legs, we color one of them with 1, and all the others with 2. Also, $v$ has at least two legs. We color one of them with 2, and all the others with 3. Finally, we color $uv$ with 2. Thus, both $u$ and $v$ are good.

For each $C_k$ $(1\leq k\leq q)$, since $\left|C_k\right|\geq 3$, it follows that there exists a vertex $v_0$ in $C_k$ having at least two neighbors in $C_k$. Starting from $v_0$, we construct a BFS-tree $T$ of $C_k$. Suppose that the neighbors of $v_0$ in $C_k$ are $\{v_1, v_2,\ldots, v_p\}$ $(p\geq2)$, which form the first level of $T$. We call the subtree of $T$ rooted at $v_i$ $(1\leq i\leq p-1)$ of {\it type} $\uppercase\expandafter{\romannumeral1}$ and the subtree of $T$ rooted at $v_p$ of {\it type} $\uppercase\expandafter{\romannumeral2}$. There may be many subtrees of type $\uppercase\expandafter{\romannumeral1}$, but only one subtree of type $\uppercase\expandafter{\romannumeral2}$. For each vertex $v$ in $C_k$, we denote one leg of $v$ by $e_v$, the corresponding foot of $v$ by $t(v)$, the unique edge joining $v$ and its parent $p(v)$ in $T$ by $f_v$. Now, we color the edge $e_v$ and $f_v$ as follows: $c(e_{v_0})=3$; $c(f_{v_i})=2$ and $c(e_{v_i})=1$ for $(1\leq i\leq p-1)$; $c(f_{v_p})=1$ and $c(e_{v_p})=2$; for each vertex $v$ in $V(C_k)\setminus\{v_1, v_2,\ldots, v_p\}$, if $\alpha(v)=v_p$, then set $c(f_v)=2$ and $c(e_v)=3$ when $\ell(v)\equiv0 \ (mod \ 3)$, $c(f_v)=1$ and $c(e_v)=2$ when $\ell(v)\equiv 1 \ (mod \ 3)$, $c(f_v)=3$ and $c(e_v)=1$ when $\ell(v)\equiv 2 \ (mod \ 3)$; if $\alpha(v)=v_i$ $(1\leq i\leq p-1)$, then set $c(f_v)=1$ and $c(e_v)=3$ when $\ell(v)\equiv 0 \ (mod \ 3)$, $c(f_v)=2$ and $c(e_v)=1$ when $\ell(v)\equiv 1 \ (mod \ 3)$, $c(f_v)=3$ and $c(e_v)=2$ when $\ell(v)\equiv 2 \ (mod \ 3)$.
Note that the subtrees of the same type are colored in the same way.

Now, for any non-leaf vertex $v$ in $T$, there exist three internally disjoint strong-proper $v-D$ paths. As for  the root $v_0$, $P^{v_0}_{1}=v_0t(v_0)$; $P^{v_0}_{2}=v_0v_1t(v_1)$; $P^{v_0}_{3}=v_0v_pt(v_p)$. As for any other non-leaf vertex $v$ in $T$, $P^{v}_{1}=vt(v)$; $P^{v}_{2}=vp(v)t(p(v))$; $P^{v}_{3}=vch(v)t(ch(v))$. Hence, all the non-leaf vertices of $T$ are good.

It remains to deal with the leaves of $T$. Pick a leaf $w$ in $T$. Since $w$ has no children, it has exactly two internally disjoint strong-proper $w-D$ paths: $P^{w}_{1}=wt(w)$; $P^{w}_{2}=wp(w)t(p(w))$. In order to make $w$ good, we need to provide the third path $P^{w}_{3}$ which is internally disjoint with $P_1^w$ and $P_2^w$. Since $w\in \overline{D}$, we have $d(w)\geq3$. It follows that $w$ has another neighbor which is not $t(w)$, $p(w)$. Let $W=\{w=w_1, w_2,\ldots, w_a\}$ be the children of $p(w)$ such that $w_i \ (1 \le i \le a)$ is a leaf of $T$ and in the subtrees of the same type. Then, $G[W]$ contains a spanning subgraph $H$ which consists of the components of the following two types: (1) a star, (2) an isolated vertex, where the isolated vertices of $H$ are just the isolated vertices of $G[W]$. For each component of type (1), let $S$ be the star and $V(S)=\{w_{i_1},w_{i_2}, \ldots, w_{i_r}\} \ (r \ge 2)$, where $w_{i_1}$ is the central vertex of $S$. Now we recolor the edge $e_{w_{i_1}}$ and color all edges of $S$. If $w_{i_1}$ is in the subtree of type $\uppercase\expandafter{\romannumeral1}$, then recolor $e_{w_{i_1}}$ with 1 and color all edges of $S$ with 2 when $\ell(w_{i_1})\equiv0 \ (mod \ 3)$; recolor $e_{w_{i_1}}$ with 2 and color all edges of $S$ with 3 when $\ell(w_{i_1})\equiv1 \ (mod \ 3)$; recolor $e_{w_{i_1}}$ with 3 and color all edges of $S$ with 1 when $\ell(w_{i_1})\equiv2 \ (mod \ 3)$. If $w_{i_1}$ is in the subtree of type $\uppercase\expandafter{\romannumeral2}$, then recolor $e_{w_{i_1}}$ with 2 and color all edges of $S$ with 1 when $\ell(w_{i_1})\equiv0 \ (mod \ 3)$; recolor $e_{w_{i_1}}$ with 1 and color all edges of $S$ with 3 when $\ell(w_{i_1})\equiv1 \ (mod \ 3)$; recolor $e_{w_{i_1}}$ with 3 and color all edges of $S$ with 2 when $\ell(w_{i_1})\equiv2 \ (mod \ 3)$. Note that the recoloring of the edge $e_{w_{i_1}}$ has no influence on $p(w)$ since $p(w)$ has at least two children in this case. It is easy to check that for the center $w_{i_1}$ of $S$, there exists a required path $P^{w_{i_1}}_{3}=w_{i_1}w_{i_2}t(w_{i_2})$, and for every vertex $w_{i_t}\in S \ (2 \le t \le r)$, there exists a required path $P^{w_{i_t}}_{3}=w_{i_t}w_{i_1}t(w_{i_1})$. Thus, every leaf in the components of type (1) is good.

For each component of type (2), let $w$ be the isolated vertex and $w'$ be another neighbor of $w$. Note that $w' \notin W$. If $w' \in D$, then we color the edge $ww'$ as follows: if $w$ is in the subtree of type $\uppercase\expandafter{\romannumeral1}$, then we color $c(ww')=1$ when $\ell(w)\equiv 0 \ (mod \ 3)$, $c(ww')=2$ when $\ell(w)\equiv 1 \ (mod \ 3)$, $c(ww')=3$ when $\ell(w)\equiv 2 \ (mod \ 3)$; if $w$ is in the subtree of type $\uppercase\expandafter{\romannumeral2}$, then we color $c(ww')=2$ when $\ell(w)\equiv 0 \ (mod \ 3)$, $c(ww')=1$ when $\ell(w)\equiv 1 \ (mod \ 3)$, $c(ww')=3$ when $\ell(w)\equiv 2 \ (mod \ 3)$.  Note that for any vertex $w$ in the component of type (2) satisfying $w' \in D$, we have $P^{w}_{3}=ww'$. Thus, $w$ is good.

Now we suppose $w' \in T$. Then, $w'$ is either a non-leaf vertex of $T$ or a leaf vertex of $T$ with $p(w')\neq p(w)$. Notice that if $e_{w'}$ is recolored, then $w'$ is a good leaf, and $w'$ has a neighbor $w''$ such that $w''$ is a sibling of $w'$. We distinguish the following four cases:

\noindent\textbf{Case 1:} $w$ and $w'$ are in the subtree of type $\uppercase\expandafter{\romannumeral1}$.

Since $T$ is a BFS-tree, we have that $\ell(w')=\ell(w)-1$ or $\ell(w')=\ell(w)$ or $\ell(w')=\ell(w)+1$. Then, we consider the following three subcases.

\textbf{Subcase 1.1:} $\ell(w)\equiv 0 \ (mod \ 3)$.

If $\ell(w')=\ell(w)-1$, then color $ww'$ with 1. Thus, $P^{w}_{3}=ww'p(w')t(p(w'))$. If $w'$ is bad, then $P^{w'}_{3}=w'wt(w)$.

If $\ell(w')=\ell(w)$, then color $ww'$ with 3. Thus, $P^{w}_{3}=ww'p(w')p(p(w'))t(p(p(w')))$. If $w'$ is bad, then $P^{w'}_{3}=w'wp(w)p(p(w))t(p(p(w)))$.

If $\ell(w')=\ell(w)+1$, then color $ww'$ with 2. If $e_{w'}$ is recolored, then $w'$ is already good. Thus, $P^{w}_{3}=ww'w''t(w'')$ (where $w''$ is a sibling of $w'$). If $e_{w'}$ is not recolored, then $P^{w}_{3}=ww't(w')$. In this situation, if $w'$ is bad, then $P^{w'}_{3}=w'wp(w)t(p(w))$.

\textbf{Subcase 1.2:} $\ell(w)\equiv1 \ (mod \ 3)$.

If $\ell(w')=\ell(w)-1$, then color $ww'$ with 2. Thus, $P^{w}_{3}=ww'p(w')t(p(w'))$. If $w'$ is bad, $P^{w'}_{3}=w'wt(w)$.

If $\ell(w')=\ell(w)$, then color $ww'$ with 1. If $w$ and $w'$ are in the first level, then $w'$ has at least one child since $p(w')=p(w)$ and is already good. Thus, $P^{w}_{3}=ww'ch(w')t(ch(w'))$. Now suppose that $w$ and $w'$ are not in the first level. Then, $P^{w}_{3}=ww'p(w')p(p(w'))t(p(p(w')))$. If $w'$ is bad, then $P^{w'}_{3}=w'wp(w)p(p(w))t(p(p(w)))$.

If $\ell(w')=\ell(w)+1$, then color $ww'$ with 3. If $e_{w'}$ is recolored, then $w'$ is already good. Thus, $P^{w}_{3}=ww'w''t(w'')$ (where $w''$ is a sibling of $w'$). If $e_{w'}$ is not recolored, then $P^{w}_{3}=ww't(w')$. In this case, if $w'$ is bad, then $P^{w'}_{3}=w'wp(w)t(p(w))$.

\textbf{Subcase 1.3:} $\ell(w)\equiv 2 \ (mod \ 3)$.

If $\ell(w')=\ell(w)-1$, then color $ww'$ with 3. Thus, $P^{w}_{3}=ww'p(w')t(p(w'))$. If $w'$ is bad, then $P^{w'}_{3}=w'wt(w)$.

If $\ell(w')=\ell(w)$, then color $ww'$ with 2. Thus, $P^{w}_{3}=ww'p(w')p(p(w'))t(p(p(w')))$. If $w'$ is bad, then $P^{w'}_{3}=w'wp(w)p(p(w))t(p(p(w)))$.

If $\ell(w')=\ell(w)+1$, then color $ww'$ with 1. If $e_{w'}$ is recolored, then $w'$ is already good. Thus, $P^{w}_{3}=ww'w''t(w'')$ (where $w''$ is a sibling of $w'$). If $e_{w'}$ is not recolored, then $P^{w}_{3}=ww't(w')$. In this case, if $w'$ is bad, then $P^{w'}_{3}=w'wp(w)t(p(w))$.

Thus, both $w$ and $w'$ are good.

\noindent\textbf{Case 2:} $w$ is in the subtrees of type $\uppercase\expandafter{\romannumeral1}$ and $w'$ is in the subtree of type $\uppercase\expandafter{\romannumeral2}$.

Since $T$ is a BFS-tree, it follows that $\ell(w')=\ell(w)-1$ or $\ell(w')=\ell(w)$. Then, we consider the following three subcases.

\textbf{Subcase 2.1:} $\ell(w)\equiv 0 \ (mod \ 3)$.

If $\ell(w')=\ell(w)-1$, then we distinguish two situations. If $e_{w'}$ is not recolored, then color $ww'$ with 2. Thus, $P^{w}_{3}=ww't(w')$. In this situation, if $w'$ is bad, then $P^{w'}_{3}=w'wt(w)$. If $e_{w'}$ is recolored, then color $ww'$ with 3 and $w'$ is already good. Thus, $P^{w}_{3}=ww'w''t(w'')$ (where $w''$ is a sibling of $w'$).

If $\ell(w')=\ell(w)$, then color $ww'$ with 3. Thus, $P^{w}_{3}=ww'p(w')t(p(w'))$. If $w'$ is bad, then $P^{w'}_{3}=w'wp(w)t(p(w))$.

\textbf{Subcase 2.2:} $\ell(w)\equiv 1 \ (mod \ 3)$.

If $\ell(w')=\ell(w)-1$, then color $ww'$ with 3. Thus, $P^{w}_{3}=ww'p(w')p(p(w'))t(p(p(w')))$. If $w'$ is bad, then $P^{w'}_{3}=w'wp(w)p(p(w))t(p(p(w)))$.

If $\ell(w')=\ell(w)$, then we distinguish two situations. If $e_{w'}$ is not recolored, then color $ww'$ with 3. Thus, $P^{w}_{3}=ww't(w')$. In this situation, if $w'$ is bad, then $P^{w'}_{3}=w'wt(w)$. If $e_{w'}$ is recolored, then color $ww'$ with 2 and $w'$ is already good. Thus, $P^{w}_{3}=ww'w''t(w'')$ (where $w''$ is a sibling of $w'$).

\textbf{Subcase 2.3:} $\ell(w)\equiv 2 \ (mod \ 3)$.

If $\ell(w')=\ell(w)-1$, then color $ww'$ with 2, Thus, $P^{w}_{3}=ww'p(w')t(p(w'))$. If $w'$ is bad, then $P^{w'}_{3}=w'wp(w)t(p(w))$.

If $\ell(w')=\ell(w)$, then color $ww'$ with 1. Thus, $P^{w}_{3}=ww'p(w')p(p(w'))t(p(p(w')))$. If $w'$ is bad, then $P^{w'}_{3}=w'wp(w)p(p(w))t(p(p(w)))$.

Thus, both $w$ and $w'$ are good.

\noindent\textbf{Case 3:} If $w$ is in the subtrees of type $\uppercase\expandafter{\romannumeral2}$ and $w'$ is the subtree of type $\uppercase\expandafter{\romannumeral1}$.

Since $T$ is a BFS-tree, we have that $\ell(w')=\ell(w)$ or $\ell(w')=\ell(w)+1$. Then, we consider the following three subcases.

\textbf{Subcase 3.1:} $\ell(w)\equiv 0 \ (mod \ 3)$.

If $\ell(w')=\ell(w)$, then color $ww'$ with 3. Thus, $P^{w}_{3}=ww'p(w')t(p(w'))$. If $w'$ is bad, then $P^{w'}_{3}=w'wp(w)t(p(w))$.

If $\ell(w')=\ell(w)+1$, then color $ww'$ with 3. Thus, $P^{w}_{3}=ww'p(w')p(p(w'))t(p(p(w')))$. If $w'$ is bad, then $P^{w'}_{3}=w'wp(w)p(p(w))t(p(p(w)))$.

\textbf{Subcase 3.2:} $\ell(w)\equiv 1 \ (mod \ 3)$.

If $\ell(w')=\ell(w)$, then we distinguish two situations. If $e_{w'}$ is not recolored, then color $ww'$ with 3. Thus, $P^{w}_{3}=ww't(w')$. In this situation, if $w'$ is bad, then $P^{w'}_{3}=w'wt(w)$. If $e_{w'}$ is recolored, then color $ww'$ with 2 and $w'$ is already good. Thus, $P^{w}_{3}=ww'w''t(w'')$ (where $w''$ is a sibling of $w'$).

If $\ell(w')=\ell(w)+1$, then color $ww'$ with 2, Thus, $P^{w}_{3}=ww'p(w')t(p(w'))$. If $w'$ is bad, then $P^{w'}_{3}=w'wp(w)t(p(w))$.

\textbf{Subcase 3.3:} $\ell(w)\equiv 2 \ (mod \ 3)$.

If $\ell(w')=\ell(w)$, then color $ww'$ with 1. Thus, $P^{w}_{3}=ww'p(w')p(p(w'))t(p(p(w')))$. If $w'$ is bad, then $P^{w'}_{3}=w'wp(w)p(p(w))t(p(p(w)))$.

If $\ell(w')=\ell(w)+1$, then we distinguish two situations. If $e_{w'}$ is not recolored, then color $ww'$ with 2. Thus, $P^{w}_{3}=ww't(w')$. In this situation, if $w'$ is bad, then $P^{w'}_{3}=w'wt(w)$. If $e_{w'}$ is recolored, then color $ww'$ with 3 and $w'$ is already good. Thus, $P^{w}_{3}=ww'w''t(w'')$ (where $w''$ is a sibling of $w'$).

Thus, both $w$ and $w'$ are good.

\noindent\textbf{Case 4:} If $w$, $w'$ are in the subtree of type $\uppercase\expandafter{\romannumeral2}$.

Since $T$ is a BFS-tree, it follows that $\ell(w')=\ell(w)-1$ or $\ell(w')=\ell(w)$ or $\ell(w')=\ell(w)+1$. Then, we consider the following three subcases.

\textbf{Subcase 4.1:} $\ell(w)\equiv 0 \ (mod \ 3)$.

If $\ell(w')=\ell(w)-1$, then color $ww'$ with 2. Thus, $P^{w}_{3}=ww'p(w')t(p(w'))$. If $w'$ is bad, then $P^{w'}_{3}=w'wt(w)$.

If $\ell(w')=\ell(w)$, then color $ww'$ with 3. Thus, $P^{w}_{3}=ww'p(w')p(p(w'))t(p(p(w')))$. If $w'$ is bad, then $P^{w'}_{3}=w'wp(w)p(p(w))t(p(p(w)))$.

If $\ell(w')=\ell(w)+1$, then color $ww'$ with 1. If $e_{w'}$ is recolored, then $w'$ is already good. Thus, $P^{w}_{3}=ww'w''t(w'')$ (where $w''$ is a sibling of $w'$). If $e_{w'}$ is not recolored, then $P^{w}_{3}=ww't(w')$. In this case, if $w'$ is bad, then $P^{w'}_{3}=w'wp(w)t(p(w))$.

\textbf{Subcase 4.2:} $\ell(w)\equiv 1 \ (mod \ 3)$.

If $\ell(w')=\ell(w)-1$, then color $ww'$ with 1. Thus, $P^{w}_{3}=ww'p(w')t(p(w'))$. If $w'$ is bad, then $P^{w'}_{3}=w'wt(w)$.

If $\ell(w')=\ell(w)$, then color $ww'$ with 2. If $w$ and $w'$ are in the first level, then $w'$ has at least one child since $p(w')=p(w)$ and is already good. Thus, $P^{w}_{3}=ww'ch(w')t(ch(w'))$. Now suppose that $w$ and $w'$ are not in the first level. Then, $P^{w}_{3}=ww'p(w')p(p(w'))t(p(p(w')))$. If $w'$ is bad, then $P^{w'}_{3}=w'wp(w)p(p(w))t(p(p(w)))$.

If $\ell(w')=\ell(w)+1$, then color $ww'$ with 3. If $e_{w'}$ is recolored, then $w'$ is already good. Thus, $P^{w}_{3}=ww'w''t(w'')$ (where $w''$ is a sibling of $w'$). If $e_{w'}$ is not recolored, then $P^{w}_{3}=ww't(w')$. In this case, if $w'$ is bad, then $P^{w'}_{3}=w'wp(w)t(p(w))$.

\textbf{Subcase 4.3:} $\ell(w)\equiv 2 \ (mod \ 3)$.

If $\ell(w')=\ell(w)-1$, then color $ww'$ with 3. Thus $P^{w}_{3}=ww'p(w')t(p(w'))$. If $w'$ is bad, then $P^{w'}_{3}=w'wt(w)$.

If $\ell(w')=\ell(w)$, then color $ww'$ with 1. Thus, $P^{w}_{3}=ww'p(w')p(p(w'))t(p(p(w')))$. If $w'$ is bad, then $P^{w'}_{3}=w'wp(w)p(p(w))t(p(p(w)))$.

If $\ell(w')=\ell(w)+1$, then color $ww'$ with 2. If $e_{w'}$ is recolored, then $w'$ is already good. Thus, $P^{w}_{3}=ww'w''t(w'')$ (where $w''$ is a sibling of $w'$). If $e_{w'}$ is not recolored, then $P^{w}_{3}=ww't(w')$. In this case, if $w'$ is bad, then $P^{w'}_{3}=w'wp(w)t(p(w))$.

Thus, both $w$ and $w'$ are good.

After the above process, $w$ becomes good, and so does $w'$ if $w'$ is bad. Note that all the good vertices are still good since we just color the edge $ww'$. As a result, every vertex in $T$ is good.

If there still remains uncolored edges in $E[D,\overline{D}]$ and $E(G[\overline{D}])$, then color them with 1. Now we have a coloring of all the edges in $E[D,\overline{D}]$ and $E(G[\overline{D}])$ using three colors from $\{1,2,3\}$ such that all the vertices in $\overline{D}$ are good. Next, we color the edges in $G[D]$ with $px_{3}(G[D])$ fresh colors such that for each triple of vertices in $D$, there is a proper tree in $G[D]$ connecting them. Thus, we provide an edge-coloring $c$ of $G$ using $px_{3}(G[D])+3$ colors.

Now we show that this edge-coloring $c$ is a 3-proper coloring of $G$, which implies $px_{3}(G) \le px_{3}(G[D])+3$. We first claim that for any three vertices $u, v, w$ in $\overline{D}$, there exist a proper $u-D$ path $P^u$, a proper $v-D$ path $P^v$ and a proper $w-D$ path $P^w$ such that $P^u \cup P^v \cup P^w$ is also proper. Since this edge-coloring makes every vertex of $\overline{D}$ good, we only need to consider the situation that $u, v, w$ are in the same component of $G-D$. So, $u, v, w \in C_k \ (1 \le k \le q)$. Note that for any vertex $x \ne v_0 \in C_k$, there are three internally disjoint strong-proper $x-D$ paths $P_1^x, P_2^x, P_3^x$ such that $P_1^x=xt(x)$ and $P_2^x=xp(x)t(p(x))$. For $v_0 \in C_k$, the three internally disjoint strong-proper $v_0-D$ paths are $P_1^{v_0}=v_0t(v_0)$, $P_2^{v_0}=v_0v_1t(v_1)$ and $P_3^{v_0}=v_0v_pt(v_p)$. If $\{c(e_u), c(e_v), c(e_w)\}$ contains three distinct colors, then $P_1^u \cup P_1^v \cup P_1^w$ is also proper. If $\{c(e_u), c(e_v), c(e_w)\}$ contains two distinct colors, without loss of generality, assume $c(e_u) \ne c(e_v)$, then it is easy to check that either $P_1^u \cup P_1^v \cup P_2^w$ or $P_1^u \cup P_1^v \cup P_3^w$ is proper. Now we assume that $c(e_u) = c(e_v) = c(e_w)$. If $u, v, w$ are in the subtrees of the same type, then we distinguish the following situations. If one of $\{e_u, e_v, e_w\}$ is recolored, without loss of generality, assume that $e_u$ is recolored, then $P_2^u \cup P_1^v \cup P_2^w$ is proper. If two of $\{e_u, e_v, e_w\}$ are  recolored, without loss of generality, assume $e_w$ is not recolored, then $P_2^u \cup P_1^v \cup P_2^w$ is proper. If $e_u$, $e_v$ and $e_w$ are simultaneously recolored or not recolored, without loss of generality, assume $v$ is visited before $w$ in $T$, then $P_1^u \cup P_2^v \cup P_3^w$ is proper. Now suppose that $u, v, w$ are in the subtrees of different types. Without loss of generality, assume $u, v$ are in the subtree of the same type, and $w$ is in the subtree of the other type. If $e_u$, $e_v$ and $e_w$ are simultaneously recolored or not recolored, then $P_1^u \cup P_2^v \cup P_2^w$ is proper. If $e_u$ and $e_v$ are recolored, $e_w$ is not recolored, then $P_1^u \cup P_3^v \cup P_2^w$ is proper. If one of $\{e_u, e_v\}$ is recolored, $e_w$ is recolored, without loss of generality, assume $e_u$ is recolored, then $P_2^u \cup P_1^v \cup P_2^w$ is proper. If one of $\{e_u, e_v\}$ is recolored, $e_w$ is not recolored, without loss of generality, assume $e_u$ is recolored, then $P_1^u \cup P_2^v \cup P_2^w$ is proper. If $e_u$ and $e_v$ are not recolored, $e_w$ is recolored, then $P_1^u \cup P_2^v \cup P_3^w$ is proper. Thus, the claim holds.

Next, it is sufficient to show that for any three vertices $u$, $v$, $w$ of $G$, there exists a proper tree connecting them. If all of them are in $D$, then there is already a proper tree connecting them in $G[D]$. If one of them is in $\overline{D}$, without loss of generality, say $u \in \overline{D}$, then any leg of $u$ (colored by 1, 2 or 3) together with the proper tree connecting $v$, $w$, and the corresponding foot of $u$ in $G[D]$ forms a proper $\{u, v, w\}$-tree. If two of them are in $\overline{D}$, without loss of generality, say $u,v\in \overline{D}$, then there exist a proper $u-D$ path $P^u$, a proper $v-D$ path $P^v$ such that $P^u\cup P^v$ is also proper. Assume that the endvertices of $P^u$, $P^v$ in $D$ are $u'$, $v'$, respectively. Then, the proper tree connecting $u'$, $v'$ and $w$ together with the paths $P^u$ and $P^v$ forms a proper $\{u,v,w\}$-tree. If all of them are in $\overline{D}$, then there exist a proper $u-D$ path $P^u$, a proper $v-D$ path $P^v$ and a proper $w-D$ path $P^w$ such that $P^u\cup P^v\cup P^w$ is also proper. Assume that the endvertices of $P^u$, $P^v$ and $P^w$ in $D$ are $u'$, $v'$, $w'$, respectively. Then, the proper tree in $G[D]$ connecting $u'$, $v'$, $w'$ together with the paths $P^u$, $P^v$ and $P^w$ forms a proper $\{u, v, w\}$-tree.

To complete the proof of Theorem \ref{thm4.6}, we show the tightness of the bound with the graph class $\mathcal{G}$. Let $p$ be an integer with $p\ge 3$, $\mathcal{G}=$ \{$G$: $G$ is a graph obtained by taking $p$ complete graph $K_{i_{1}}, K_{i_{2}}, \ldots, K_{i_{p}}$ with just a vertex in common, say $v_{0}$  for $i_{j} \ge 4$ when $1\le j\le p$\}. For any graph $G$ in $\mathcal{G}$, it is obvious that $D=\{v_{0}\}$ is a connected $3$-way dominating set. By Theorem \ref{thm4.6}, we have $px_{3}(G)\le px_{3}(G[D])+3=3$. On the other hand, it is easy to show that $px_{3}(G)=3$. Thus, the bound is tight.
\end{proof}

\begin{cor}\label{cor4.7} Let $G$ be a connected graph with minimum degree $\delta(G)\ge 3$. Then, $px_{3}(G)\le \gamma_{c}(G)+2$.
\end {cor}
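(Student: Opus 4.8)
The plan is to read this off Theorem \ref{thm4.6} by taking $D$ to be a \emph{minimum} connected dominating set of $G$, so that $|D|=\gamma_{c}(G)$; such a set exists because $G$ is connected. First I would observe that, since $\delta(G)\ge 3$, every vertex of $\overline{D}$ has degree at least $3$ in $G$, so $D$ is (without any extra assumption) a connected $3$-way dominating set in the sense of Definition \ref{df2}. Hence Theorem \ref{thm4.6} applies and gives $px_{3}(G)\le px_{3}(G[D])+3$.

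Next I would bound $px_{3}(G[D])$ using the spanning-tree estimate already available in the paper. The graph $G[D]$ is connected of order $\gamma_{c}(G)$, and every spanning tree $T$ of $G[D]$ satisfies $\Delta(T)\le \gamma_{c}(G)-1$ (with equality only when $T$ is a star). When $\gamma_{c}(G)\ge 3$, Proposition \ref{pro2.4} (equivalently, Propositions \ref{pro2.1} and \ref{pro2.3} applied to such a $T$) yields $px_{3}(G[D])\le \min\{\Delta(T): T\text{ a spanning tree of }G[D]\}\le \gamma_{c}(G)-1$. Combining this with the inequality from Theorem \ref{thm4.6} gives $px_{3}(G)\le (\gamma_{c}(G)-1)+3=\gamma_{c}(G)+2$, as claimed.

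Finally I would dispose of the two small cases $\gamma_{c}(G)\in\{1,2\}$, which are excluded from Proposition \ref{pro2.4} since it requires order at least $3$: if $\gamma_{c}(G)=1$ then $G[D]$ is trivial and the colouring built in the proof of Theorem \ref{thm4.6} uses no colour inside $G[D]$, so $px_{3}(G)\le 3=\gamma_{c}(G)+2$; if $\gamma_{c}(G)=2$ then $G[D]=K_{2}$, and colouring its single edge with one fresh colour gives $px_{3}(G)\le 1+3=4=\gamma_{c}(G)+2$. As this is purely a corollary of the already-established Theorem \ref{thm4.6}, there is no genuine obstacle; the only points needing a little care are verifying that the induced subgraph on a minimum connected dominating set always has a spanning tree of maximum degree at most $\gamma_{c}(G)-1$, and treating the two trivial orders separately.
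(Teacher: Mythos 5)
Your proposal is correct and is essentially identical to the paper's own proof: take a minimum connected dominating set $D$, note that $\delta(G)\ge 3$ makes it a connected $3$-way dominating set, bound $px_{3}(G[D])\le |D|-1=\gamma_{c}(G)-1$ via a spanning tree, and apply Theorem \ref{thm4.6}. Your separate treatment of the orders $\gamma_{c}(G)\in\{1,2\}$ is a minor extra care the paper omits, but it does not change the argument.
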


\begin{proof} Since $\delta(G)\ge 3$, every connected dominating set of $G$ is a connected $3$-way dominating set. Consider a minimum connected dominating set $D$ with size $\gamma_{c}(G)$. Then, $px_{3}(G[D])\le \left|D\right|-1=\gamma_{c}(G)-1$. We have that $px_{3}(G)\le px_{3}(G[D])+3\le \gamma_{c}(G)+2$ by Theorem \ref{thm4.6}.
\end{proof}

Caro et al. \cite{CWY} showed that for every connected graph $G$ of order $n$ and minimum degree $\delta$, $\gamma_{c}(G)=n\frac{\ln(\delta+1)}{\delta+1}(1+o_{\delta}(1))$. With the help of Corollary \ref{cor4.7}, we obtain the following result.

\begin{cor}\label{cor4.8} Let $G$ be a connected graph with minimum degree $\delta(G)\ge 3$. Then, $px_{3}(G)\le n\frac{\ln(\delta+1)}{\delta+1}(1+o_{\delta}(1))+2$.
\end {cor}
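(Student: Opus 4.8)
The plan is to derive this directly from Corollary \ref{cor4.7} together with the asymptotic estimate of Caro, West and Yuster \cite{CWY} for the connected domination number quoted immediately above. Since $\delta(G)\ge 3$ by hypothesis, Corollary \ref{cor4.7} applies and gives
\[
px_{3}(G)\le \gamma_c(G)+2 .
\]
The Caro et al. result asserts that for a connected graph $G$ of order $n$ and minimum degree $\delta$ one has $\gamma_c(G)=n\frac{\ln(\delta+1)}{\delta+1}(1+o_\delta(1))$. Substituting this into the displayed inequality immediately yields
\[
px_{3}(G)\le n\frac{\ln(\delta+1)}{\delta+1}(1+o_\delta(1))+2 ,
\]
which is exactly the claimed bound.

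There is no genuine obstacle here: the corollary is a one-step consequence of a result established earlier in the paper and an external theorem that the authors are entitled to invoke. The only point worth keeping track of is the error term: the additive constant $2$ is displayed explicitly rather than absorbed into the $o_\delta(1)$ factor, since for fixed $n$ the quantity $n\frac{\ln(\delta+1)}{\delta+1}$ need not grow, so an honest statement must carry the $+2$ separately; this makes the bound valid simultaneously for all $n$ and all $\delta\ge 3$. (One could alternatively bypass Corollary \ref{cor4.7} and argue straight from Theorem \ref{thm4.6}, taking $D$ to be a minimum connected dominating set, noting that it is a connected $3$-way dominating set because $\delta\ge 3$, and using $px_{3}(G[D])\le |D|-1$ from Proposition \ref{pro2.4}; but this only re-derives Corollary \ref{cor4.7} en route, so it is not a genuinely different proof.)
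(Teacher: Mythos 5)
Your proof is correct and is exactly the paper's argument: the authors derive Corollary \ref{cor4.8} by combining Corollary \ref{cor4.7} with the Caro--West--Yuster estimate for $\gamma_c(G)$, precisely as you do. Your side remark about keeping the additive $2$ outside the $o_\delta(1)$ term is a reasonable observation but does not change anything substantive.
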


Next, we will give another upper bound for the $3$-proper index of graphs with respect to the connected $3$-dominating set.

\begin{thm}\label{thm4.10}
If $D$ is a connected $3$-dominating set of a connected graph $G$ with minimum degree $\delta(G)\ge 3$, then $px_{3}(G) \le px_{3}(G[D])+1$. Moreover, the bound is tight.
\end{thm}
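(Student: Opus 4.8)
The plan is to color $G[D]$ with an optimal $3$-proper coloring and then spend only \emph{one} extra color on everything outside $G[D]$; the point is that the connected $3$-dominating hypothesis is much stronger than the connected $3$-way dominating hypothesis of Theorem \ref{thm4.6}, since now \emph{every} vertex of $\overline D$ has at least three distinct feet in $D$. Concretely: if $\overline D=\emptyset$ there is nothing to prove, so assume $\overline D\neq\emptyset$; then any $v\in\overline D$ has three neighbours in $D$, hence $|D|\ge 3$ and $px_3(G[D])$ is defined. Fix a $3$-proper coloring $c_0$ of $G[D]$ with a color set $C$, $|C|=px_3(G[D])$ (note $c_0$ also makes $G[D]$ proper connected, since the $u$--$v$ subpath of a proper tree for $\{u,v,x\}$ is a proper $u$--$v$ path). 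Introduce a new color $0\notin C$ and color every edge of $E[D,\overline D]\cup E(G[\overline D])$ with $0$. This uses $px_3(G[D])+1$ colors in total; call the coloring $c$.

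The verification that $c$ is $3$-proper goes by how many of $S=\{u,v,w\}$ lie in $\overline D$. If $S\subseteq D$, a proper $S$-tree exists in $G[D]$ already. Otherwise, the key step is: for $S\cap\overline D$ one can choose, for each $x\in S\cap\overline D$, a foot $d_x\in D$ of $x$ so that these feet are pairwise distinct — greedily, since $|S\cap\overline D|\le 3$ and each vertex of $\overline D$ has at least three feet. Take in $G[D]$ a proper tree $T'$ connecting the $(\le 3)$-element set $(S\cap D)\cup\{d_x:x\in S\cap\overline D\}$ (if this set has size $\le 2$, a proper path suffices), and attach to $T'$ the pendant leg $xd_x$ (color $0$) for each $x\in S\cap\overline D$. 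The result is a tree containing $S$; it is proper because $T'$ is proper, each leg (color $0$) meets at its foot only edges of $T'$ (colors in $C$), and two legs $xd_x,yd_y$ with $x\neq y$ are non-adjacent as $d_x\neq d_y$. Hence $px_3(G)\le px_3(G[D])+1$.

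The real content beyond Theorem \ref{thm4.6} is that one color now suffices; there is no combinatorial obstruction left, only the bookkeeping of the coincidences above, which the ``at least three feet'' property dissolves. For tightness I would take $G=K_3\vee\overline{K_m}$ with $m$ large (say $m\ge 17$) and $D=V(K_3)$: then $D$ is a connected $3$-dominating set, $\delta(G)=3$, and $px_3(G[D])=px_3(K_3)=2$, so the theorem gives $px_3(G)\le 3$. For the lower bound $px_3(G)\ge 3$, observe that with only two colors a proper tree has maximum degree $2$ (two equally colored edges at a vertex are adjacent), so it is a proper path; each vertex $z$ of $\overline{K_m}$ has exactly three legs to $V(K_3)$, so its triple of leg-colors lies in a set of size $2^3=8$, and for $m\ge 17$ three vertices $z_1,z_2,z_3$ of $\overline{K_m}$ share the same triple. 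A short counting argument — a path through $z_1,z_2,z_3$ meets at most $4$ vertices of $\overline{K_m}$ (consecutive ones need distinct separators from $V(K_3)$), so among the $\le 3$ available separators some consecutive pair of the $z_i$ is separated by a single one — shows any such path contains a subpath $z_i\,d\,z_j$ with $d\in V(K_3)$, and its two edges $z_id$, $dz_j$ are forced to the same color; hence no proper path through $z_1,z_2,z_3$ exists. Thus no $2$-coloring is $3$-proper and $px_3(G)=3=px_3(G[D])+1$, so the bound is tight.
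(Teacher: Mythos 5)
Your proposal is correct and follows essentially the same route as the paper: color $G[D]$ optimally, put a single fresh color on all edges incident to $\overline D$, and use the three-feet property to pick pairwise distinct attachment vertices for the (at most three) vertices of $S\cap\overline D$; your tightness example $K_3\vee\overline{K_m}$ is the same graph as the paper's threshold graph $rK_1\vee K_3$ with the same pigeonhole argument. If anything, you are slightly more careful than the paper in handling the degenerate case where the set of attachment vertices collapses to size two, and in spelling out why no proper path can pass through three vertices with identical leg-color triples.
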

\begin{proof}
Since $D$ is a connected 3-dominating set, every vertex in $\overline{D}$ has at least three neighbors in $D$. Let $t=px_3(G[D])$. We first color the edges in $G[D]$ with $t$ different colors from $\{2, 3, \ldots, t+1\}$ such that for every triple of vertices in $D$, there exists a proper tree in $G[D]$ connecting them. Then, we color the remaining edges with color 1.

Next, we will show that this edge-coloring makes $G$ 3-proper connected. For any triple $\{u, v, w\}$ of vertices in $G$, if all of them are in $D$, then there is already a proper tree connecting them in $G[D]$. If one of them is in $\overline{D}$, without loss of generality, say $u \in \overline{D}$, then let $u_1$ be the neighbor of $u$ in $D$. Thus, the proper tree connecting $u_1, v, w$ in $G[D]$ together with the edge $uu_1$ forms a proper $\{u,v,w\}$-tree in $G$. If two of them are in $\overline{D}$, without loss of generality, say $u, v \in \overline{D}$, then let $u_1$, $v_1$ be the two distinct neighbors of $u, v$ in $D$, respectively. Thus, the proper tree connecting $u_1, v_1, w$ in $G[D]$ together with two edges $uu_1$, $vv_1$ forms a proper $\{u,v,w\}$-tree in $G$. If all of them are in $\overline{D}$, then let $u_1$, $v_1$, $w_1$ be the three distinct neighbors of $u, v, w$ in $D$, respectively. Thus, the proper tree connecting $u_1, v_1, w_1$ in $G[D]$ together with three edges $uu_1$, $vv_1$, $ww_1$ forms a proper $\{u,v,w\}$-tree in $G$.

The tightness of the bound can be seen from the following corollaries.
\end{proof}

Next, we give some tight upper bounds for the 3-proper index of two special graph classes: threshold graphs and chain graphs, which implies the tightness of the bound in Theorem \ref{thm4.10}. A graph $G$ is called a \emph{threshold graph}, if there exists a weight function $w$: $V(G)\rightarrow \mathbb{R}$ and a real constant $t$ such that two vertices $u$, $v\in V(G)$ are adjacent if and only if $w(u)+w(v)\geq t$. We call $t$ the threshold of $G$. A bipartite graph $G(U,V)$ is called a \emph{chain graph}, if the vertices of $U$ can be ordered as $U=\{u_{1},u_{2},\ldots,u_{s}\}$ such that $N(u_{1})\subseteq N(u_{2})\subseteq\cdots\subseteq N(u_{s})$.

\begin{cor}
Let $G$ be a connected threshold graph with $\delta(G)\ge 3$. Then, $px_{3}(G) \le 3$, and the bound is tight.
\end{cor}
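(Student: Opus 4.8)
The plan is to exhibit a small connected $3$-dominating set $D$ inside $G$ and then invoke Theorem \ref{thm4.10}. Recall that a threshold graph $G$ has a vertex ordering $v_1, v_2, \ldots, v_n$ (by nondecreasing weight) in which each $v_{i}$ is either \emph{dominating} (adjacent to all earlier vertices) or \emph{isolated} (adjacent to no earlier vertex); equivalently, the last vertex $v_n$ has maximum weight and is adjacent to every other vertex, so $v_n$ is a universal vertex of $G$. First I would observe that since $\delta(G) \ge 3$, the graph $G$ is not a star, so it must contain at least one dominating vertex besides possibly forced structure; in fact the set $D$ of dominating vertices in the threshold ordering is nonempty and $G[D]$ is a clique (any two dominating vertices are adjacent, the later one being adjacent to all earlier vertices). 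Moreover every vertex of $\overline{D}$ — being an isolated vertex in the threshold construction, hence adjacent only to dominating vertices added after it — has \emph{all} of its neighbours inside $D$; since $\delta(G)\ge 3$, each such vertex therefore has at least three neighbours in $D$. Hence $D$ is a connected $3$-dominating set of $G$.

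Next I would bound $px_3(G[D])$. Since $G[D]$ is a complete graph on $|D| = d$ vertices, it is traceable, so by Corollary \ref{cor2.6}, $px_3(G[D]) = 2$ provided $d \ge 3$; and if $d \le 2$ the graph $G[D]$ is a single vertex or an edge, which we handle directly (a single edge needs $1$ color, a single vertex needs $0$). Applying Theorem \ref{thm4.10} to the connected $3$-dominating set $D$ gives $px_3(G) \le px_3(G[D]) + 1 \le 3$. (One should check the edge case where $d\le 2$; if $d=1$ then every vertex of $\overline D$ has a single neighbour, contradicting $\delta\ge 3$ unless $n\le 2$, which is excluded; if $d=2$, say $D=\{a,b\}$, then every other vertex is adjacent to at most these two vertices, again contradicting $\delta\ge 3$. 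So in fact $d \ge 3$ automatically, and the bound $px_3(G)\le 3$ follows cleanly.)

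For tightness, I would produce a specific connected threshold graph with $\delta \ge 3$ attaining $px_3(G) = 3$. A natural candidate is obtained from the tightness construction in Theorem \ref{thm4.6}: take several copies of $K_4$ (say three copies) sharing a single common vertex $v_0$; this graph is in fact a threshold graph (order the vertices so that $v_0$ is universal and within each clique the vertices are added as dominating vertices relative to that clique — one checks the weight function exists), has minimum degree $3$, and was already shown there to have $px_3 = 3$. Alternatively, verify directly on this explicit example that two colors do not suffice: with only two colors one shows some triple $S$ containing $v_0$ and vertices from two different cliques cannot be joined by a proper tree, since every such tree must pass through $v_0$ and the two edges at $v_0$ into distinct cliques force a color repetition somewhere along a path of length $\ge 2$.

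The main obstacle is the verification that the dominating vertices of a threshold graph form a clique all of whose members are seen by every non-dominating vertex — i.e., extracting the right structural description of threshold graphs from the weight-function definition — and, for tightness, confirming that the chosen extremal graph is genuinely a threshold graph and that $px_3 = 3$ rather than $2$. Both are routine once the threshold vertex-ordering characterization is in hand, so the heart of the argument is just assembling these pieces and invoking Theorem \ref{thm4.10}.
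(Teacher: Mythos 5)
Your upper-bound argument is correct and matches the paper's strategy: find a connected $3$-dominating set $D$ inducing a clique and apply Theorem \ref{thm4.10} together with Corollary \ref{cor2.6}. The only difference is the choice of $D$: the paper orders the vertices by nonincreasing weight and observes that, since the minimum-weight vertex has degree at least $3$ and adjacency is monotone in weight, the three largest-weight vertices $v_1,v_2,v_3$ are universal, so $D=\{v_1,v_2,v_3\}$ already works and $G[D]=K_3$. Your choice of $D$ as the set of all ``dominating'' vertices in the threshold construction ordering is a valid alternative and your check that $|D|\ge 3$ is forced is sound; both routes give $px_3(G)\le px_3(K_d)+1=3$.

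The tightness part of your proposal has a genuine error. The graph consisting of $p\ge 3$ copies of $K_4$ sharing a single vertex $v_0$ is \emph{not} a threshold graph: picking one edge from each of two different $K_4$'s (avoiding $v_0$) yields an induced $2K_2$, and threshold graphs are exactly the $\{2K_2,\,P_4,\,C_4\}$-free graphs. (Directly from the weight definition: if $a_1a_2$ and $b_1b_2$ are edges in distinct cliques with, say, $w(a_1)\ge w(b_1)$, then $w(a_1)+w(b_2)\ge w(b_1)+w(b_2)\ge t$ would force the nonedge $a_1b_2$ to be present.) So this example cannot witness tightness for the class of threshold graphs, and your ``one checks the weight function exists'' step is precisely where the construction breaks. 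The paper instead takes $G=rK_1\vee K_3$ with $r\ge 2\cdot 2^3+1$, which is a threshold graph (weight $1$ on the $K_3$, weight $0$ elsewhere, threshold $1$), and proves $px_3(G)\ge 3$ by pigeonhole: with only two colors, each of the $r$ independent vertices induces one of $2^3$ color patterns on its three edges into $K_3$, so three of them share a pattern and admit no proper tree joining them. You would need to replace your extremal example with one of this kind.
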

\begin{proof}
Suppose that $V(G)=\{v_{1},v_{2},\ldots,v_{n}\}$ where $w(v_{1})\ge w(v_{2})\ge \cdots\ge w(v_{n})$. Since $\delta(G)\ge 3$, $v_{1}$, $v_{2}$, $v_{3}$ are adjacent to all the other vertices in $G$. Thus, $D=\{v_{1},v_{2},v_{3}\}$ is a connected $3$-dominating set of $G$. Since $G[D]=K_{3}$, we have $px_{3}(G[D])=2$. It follows that $px_{3}(G) \le px_{3}(G[D])+1= 3$ by Theorem \ref{thm4.10}.

Next, we give a class of threshold graphs which have $px_3(G)=3$. Consider the graph $G=rK_1 \vee K_3$, where $r \ge 2 \times 2^3+1$. Let $V(rK_1)=\{v_1, v_2, \ldots, v_r\}$ and $V(K_3)=\{u_1, u_2, u_3\}$. Obviously, it is a threshold graph ($u_1,u_2,u_3$ can be given a weight 1, others a weight 0 and the threshold 1). We will show that $px_3(G)\geq 3$. By contradiction, we assume that $G$ has a 3-proper coloring with 2 colors. For each vertex $v_{i}\in rK_1$, there exists a $3$-tuple $C_{i}=(c_{1}, c_{2}, c_{3})$ so that $c(v_{i}u_{j})=c_{j}$ for $1 \le j \le 3$. Therefore, each vertex $v_{i}\in rK_1$ has $2^{3}$ different ways of coloring its incident edges using $2$ colors. Since $r\ge 2 \times 2^3+1$, there exist at least three vertices $v_{i}$, $v_{j}, v_{k} \in V$ such that $C_{i}=C_{j}=C_k$. It is easy to check that there is no proper tree connecting $v_{i}$, $v_{j}, v_{k}$ in $G$, a contradiction.
\end{proof}

\begin{cor}
Let $G$ be a connected chain graph with $\delta(G)\ge 3$. Then, $px_{3}(G) \le 3$, and the bound is tight.
\end{cor}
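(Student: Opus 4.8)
The plan is to follow the strategy used for threshold graphs: locate a small connected $3$-dominating set $D\subseteq V(G)$ with $G[D]$ traceable, and then invoke Theorem~\ref{thm4.10} together with Corollary~\ref{cor2.6}. The first ingredient is structural. A standard argument shows that in a chain graph $G(U,V)$ the part $V$ also has nested neighbourhoods, so one may fix orderings $U=\{u_1,\dots,u_s\}$ and $V=\{v_1,\dots,v_t\}$ with $N(u_1)\subseteq\cdots\subseteq N(u_s)$ and $N(v_1)\subseteq\cdots\subseteq N(v_t)$; monotonicity then forces each $N(u_i)$ to be a suffix $\{v_j,\dots,v_t\}$ of the $v$-ordering and each $N(v_j)$ to be a suffix of the $u$-ordering. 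Since $\delta(G)\ge 3$ we get $s,t\ge 3$, every suffix $N(u_i)$ has length at least $3$ so that $\{v_{t-2},v_{t-1},v_t\}\subseteq N(u_i)$, and symmetrically $\{u_{s-2},u_{s-1},u_s\}\subseteq N(v_j)$ for every $j$; moreover $G$ has no isolated vertex, whence $N(u_s)=V$ and $N(v_t)=U$.

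The second step is to put $D=\{u_{s-2},u_{s-1},u_s,v_{t-2},v_{t-1},v_t\}$ (when $|V(G)|=6$ we have $s=t=3$, $G=K_{3,3}$ is traceable, and $px_3(G)=2$ by Corollary~\ref{cor2.6}, so assume $|V(G)|\ge 7$). By the structural facts above every vertex of $\overline D$ — which lies in $U$ or in $V$ — has at least three neighbours in $D$, so $D$ is a $3$-dominating set; and $G[D]$ is connected because $N(u_s)=V$ and $N(v_t)=U$. In fact $G[D]$ contains the Hamiltonian path $v_{t-2}\,u_s\,v_{t-1}\,u_{s-1}\,v_t\,u_{s-2}$, each of whose five edges is among the adjacencies recorded above, so $G[D]$ is a traceable graph of order $6$; hence $px_3(G[D])=2$ by Corollary~\ref{cor2.6}, and Theorem~\ref{thm4.10} gives $px_3(G)\le px_3(G[D])+1=3$.

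For tightness I would take $G=K_{3,r}$ with $r\ge 2\times 2^{3}+1$: it is a connected chain graph with $\delta(G)=3$, so $px_3(G)\le 3$ by the above, and I claim equality holds. Suppose $G$ had a $3$-proper colouring with only two colours, and let $A$ be the part of size $3$. Each vertex of the large part realizes one of the $2^{3}=8$ possible colour patterns on its three edges to $A$, so three vertices $x,y,z$ of the large part share a pattern, which repeats some colour on two vertices of $A$. For any tree $T$ of $G$ through $x,y,z$: each edge of $T$ meets exactly one vertex of $A$, and properness caps the number of $T$-edges at each vertex of $A$ and at each vertex of the large part at $2$. Since $x,y,z$ carry identical colours on their (only) edges to $A$, no vertex of $A$ can be the $A$-endpoint of two $T$-edges going to $\{x,y,z\}$; therefore these edges form a perfect matching between $\{x,y,z\}$ and $A$, making $x,y,z$ leaves of $T$. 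Deleting them leaves a subtree joining the three vertices of $A$ in which each is a leaf, forcing some large-part vertex to have degree at least $3$ in $T$ — impossible for a proper colouring with two colours. Hence no proper tree contains $\{x,y,z\}$, a contradiction, so $px_3(G)=3$.

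The upper bound is essentially bookkeeping once the nested-neighbourhood structure is in hand — it reduces to a single application of Theorem~\ref{thm4.10}. \textbf{The main obstacle} is the tightness argument: one has to rule out \emph{every} conceivable shape of a would-be proper $\{x,y,z\}$-tree (the three terminals need not be leaves a priori, and Steiner vertices drawn from the large side are available), and it is the short counting above — properness bounds the degree at each vertex, and the shared colour pattern forces a matching onto $A$ — that disposes of all of them at once.
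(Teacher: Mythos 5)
Your proof is correct and follows essentially the same route as the paper: you locate a six-vertex connected $3$-dominating set inducing $K_{3,3}$ (you take the top three vertices of the dual ordering of $V$ where the paper takes the three vertices of $N(u_1)$, which is the same set of ``universal'' vertices), apply Theorem~\ref{thm4.10} with Corollary~\ref{cor2.6}, and certify tightness with $K_{3,r}$ and the same pigeonhole count $r\ge 2\cdot 2^3+1$. The only addition is that you fully justify the step the paper labels ``easy to check'' --- that three large-part vertices with identical colour patterns admit no proper tree --- via the observation that a proper tree under two colours has maximum degree two, which is a welcome but not essentially different elaboration.
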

\begin{proof}
Let $G=G(U,V)$ be a connected chain graph, where $U=\{u_{1},u_{2},\ldots,u_{s}\}$, $V=\{v_{1},v_{2},\ldots, v_{t}\}$ such that $N(u_{1})\subseteq N(u_{2})\subseteq\cdots\subseteq N(u_{s})$. Since the minimum degree of $G$ is at least three, $u_i (s-2 \le i \le s)$ is adjacent to all the vertices in $V$, and $N(u_1)$ has at least three vertices, say $\{v_1, v_2, v_3\}$. Clearly, $v_1, v_2, v_3$ are adjacent to all the vertices in $V$. Therefore, $D=\{v_{1},v_{2},v_{3},u_{s-2},u_{s-1},u_{s}\}$ is a connected $3$-dominating set of $G$. Moreover, $G[D]=K_{3,3}$ is a traceable graph, we have $px_{3}(K_{3,3})=2$. By Theorem \ref{thm4.10} we have that $px_{3}(G) \le px_{3}(K_{3,3})+1\le 3$.

Now, we give a class of chain graphs which have $px_3(G)=3$. Consider the chain graph $G=G[U,V]$, where $U=\{u_{1},u_{2},\ldots,u_{s}\}$, $V=\{v_{1},v_{2},\ldots v_{t}\}$ such that $N(u_{1})= N(u_{2})=\cdots= N(u_{s-3})=\{v_1,v_2,v_3\}$, $N(u_{s-2})= N(u_{s-1})=N(u_s)=\{v_1,v_2, \ldots,v_t\}$ and $t \ge 2 \times 2^3 +4$. Next, we show that $px_3(G)\geq3$. Suppose not, we assume that $G$ has a 3-proper coloring with 2 colors. For each vertex $v_{i}\in V$ for $4 \le i \le t$, there exists a $3$-tuple $C_{i}=(c_{1}, c_{2}, c_{3})$ such that $c(u_{j}v_{i})=c_{j}$ for $s-2 \le j \le s$. Therefore, each vertex $v_{i}\in V$ ($4 \le i \le t$) has $2^{3}$ different ways of coloring its incident edges using $2$ colors. Since $t-3 \ge 2 \times 2^3+1$, there exist at least three vertices $v_{i}$, $v_{j}, v_{k} \in V\setminus \{v_1, v_2, v_3\}$ such that $C_{i}=C_{j}=C_k$. It is easy to check that there is no proper tree connecting $v_{i}$, $v_{j}, v_{k}$ in $G$, a contradiction.
\end{proof}

\section{The $3$-proper index of 2-connected graphs}

In this section, we give an upper bound for the $3$-proper index of $2$-connected graphs. The following notation and terminology are needed in the sequel.

\begin{df}
Let $F$ be a subgraph of a graph $G$. An {\it ear} of $F$ in $G$ is a nontrivial path in $G$ whose endvertices are in $F$ but whose internal vertices are not. A {\it nested sequence} of graphs is a sequence $(G_0, G_1, \ldots, G_k)$ of graphs such that $G_i\subset G_{i+1}$, $0 \le i <k$. An {\it ear-decomposition} of a 2-connected graph $G$ is a nested sequence $(G_0, G_1, \ldots, G_k)$ of 2-connected subgraphs of $G$ such that: (1) $G_0$ is a cycle; (2) $G_i=G_{i-1} \cup P_i$, where $P_i$ is an ear of $G_{i-1}$ in $G$, $1 \le i \le k$; (3) $G_k=G$.
\end{df}

From Corollary \ref{cor2.6}, we have that if $G$ is a 2-connected Hamiltonian graph of order $n \ (n \ge 3)$, then $px_3(G)=2$. Thus, we only need to consider the non-Hamiltonian graphs.

Let $G$ be a 2-connected non-Hamiltonian graph of order $n \ (n \ge 4)$. Then, $G$ must have an even cycle. In fact, since $G$ is 2-connected, $G$ must have a cycle $C$. If $C$ is an even cycle, we are done. Otherwise, $C$ is an odd cycle, we then choose an ear $P$ of $C$ such that $V(C) \cap V(P)=\{a,b\}$. Since the lengths of the two segments between $a,b$ on $C$ have different parities, $P$ joining one of the two segments forms an even cycle. Then, starting from an even cycle $G_0$, there exists a nonincreasing ear-decomposition $(G_0, G_1, \ldots, G_t, G_{t+1}, \ldots, G_k)$ of $G$, such that $G_i=G_{i-1} \cup P_i \ (1 \le i \le k)$ and $P_i$ is a longest ear of $G_{i-1}$, i.e., $\ell(P_1) \ge \ell(P_2) \ge \cdots \ge \ell(P_k)$, where $\ell(P_i)$ denotes the length of $P_i$. Suppose that $V(P_i) \cap V(G_{i-1})=\{a_i,b_i\} \ (1 \le i \le k)$. We call the distinct vertices $a_i, b_i \ (1 \le i \le k)$ the \emph{endpoints} of the ear $P_i$, the edges incident to the endpoints in $P_i$ the end-edges of $P_i$, the other edges the internal edges of $P_i$. Without loss of generality, suppose that $\ell(P_t) \ge 2$ and $\ell(P_{t+1})= \cdots= \ell(P_k)=1$. So, $G_t$ is a 2-connected spanning subgraph of $G$. Since $G$ is non-Hamiltonian graph, we have $t \ge 2$. Denote the order of $G_i \ (0 \le i \le k)$ by $n_i$.

\begin{thm}
Let $G$ be a 2-connected non-Hamiltonian graph of order $n \ (n \ge 4)$. Then, $px_3(G) \le \lfloor\frac{n}{2}\rfloor$.
\end{thm}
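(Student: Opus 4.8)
The plan is to exploit the nonincreasing ear-decomposition $(G_0, G_1, \ldots, G_k)$ introduced just before the statement and to build a $3$-proper coloring incrementally, controlling the number of colors by the number of vertices added at each stage. The key quantitative observation is that the total number of vertices is $n = n_0 + \sum_{i=1}^{k}(\ell(P_i)-1)$, so if we can arrange to spend at most one new color per two new vertices — roughly one color per internal edge of each ear, paired up — we will land at $\lfloor n/2 \rfloor$. First I would color the initial even cycle $G_0$ with $2$ colors alternately, which makes it (in fact) proper connected, hence $3$-proper connected by Corollary~\ref{cor2.6} applied to $G_0$ (an even cycle is traceable). This uses $2 \le \lfloor n_0/2\rfloor$ colors, giving the base of the induction.

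For the inductive step I would process the ears $P_1, \ldots, P_t$ of length $\ge 2$ one at a time, and then handle the length-$1$ ears $P_{t+1},\ldots,P_k$ for free. When we attach $P_i = a_i x_1 x_2 \cdots x_{m} b_i$ (so $P_i$ adds $m = \ell(P_i)-1 \ge 1$ new internal vertices), the idea is to reuse the two ``cycle colors'' $\{1,2\}$ on the two end-edges $a_i x_1$ and $x_m b_i$ whenever possible, and to introduce roughly $\lceil m/2 \rceil$ fresh colors on the internal path $x_1 x_2 \cdots x_m$, arranged so that the path $P_i$ itself is a proper path and so that at each endpoint the end-edge color differs from at least one color already present at that endpoint in $G_{i-1}$ (this is where $2$-connectivity and the fact that $G_0$ used $\ge 2$ colors help). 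Concretely, I would color $P_i$ as a rainbow-ish proper path using the pattern ``fresh, fresh, $\ldots$'' with each new color used on two consecutive edges where possible, so that $m$ internal vertices cost at most $\lceil (m+1)/2 \rceil$ new colors; summed over all ears the bookkeeping gives $2 + \sum_i \lceil(\ell(P_i)-1+1)/2\rceil$, which I would check telescopes to at most $\lfloor n/2 \rfloor$ using $n = n_0 + \sum(\ell(P_i)-1)$ and $n_0$ even.

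The verification that the resulting coloring is $3$-proper requires showing, for every triple $\{u,v,w\} \subseteq V(G)$, a proper $\{u,v,w\}$-tree. Here I would argue that each new internal vertex $x$ of an ear $P_i$ has a ``proper escape path'' reaching $G_{i-1}$ along $P_i$ in either direction, with controlled last-edge colors, so that the situation reduces — exactly as in the $v$-$D$ path arguments of Theorem~\ref{thm4.6} — to connecting up to three vertices that already lie in some earlier $G_j$, and inductively in $G_0$ where any $3$ vertices of an evenly $2$-colored cycle are joined by a proper subpath (a path on a $2$-colored even cycle is proper). One subtlety is when two or three of $u,v,w$ lie on the same ear: then I would use the ear itself as (part of) the tree, which is proper by construction, possibly augmenting by one escape edge at an endpoint.

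The main obstacle I anticipate is the local compatibility at the endpoints $a_i, b_i$ of each ear: to keep the color count at $\lceil m/2\rceil$-ish rather than $m$, we must reuse old colors on the end-edges, but then at $a_i$ the edge $a_i x_1$ might clash with an edge of $G_{i-1}$ that is forced into some proper tree through $a_i$ — so one must show there is always a ``free'' color available at each endpoint, or else spend one extra color on that end-edge, and check that these occasional extra colors do not break the $\lfloor n/2 \rfloor$ budget (the slack comes from $n_0$ being even and from short ears of length $1$ costing nothing). Handling the length-$1$ ears $P_{t+1}, \ldots, P_k$ is easy: they add no vertices, and each such chord can simply be colored with any already-used color, since it is never needed in a proper tree (we can always route around it through $G_t$, which is already a $3$-proper connected spanning subgraph, and then invoke Proposition~\ref{pro2.1}).
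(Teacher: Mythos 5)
Your overall strategy (start from an even initial cycle, use the nonincreasing ear-decomposition, discard the length-$1$ ears via Proposition \ref{pro2.1}, and charge colors against the vertices added by each ear) is the same as the paper's, but your color-allocation scheme is essentially the reverse of what is needed, and the arithmetic does not close. You propose to reuse the two cycle colors on the \emph{end}-edges of each ear and to spend roughly $\lceil \ell(P_i)/2\rceil$ \emph{fresh} colors on its internal edges. An ear of length $\ell(P_i)$ contributes only $\ell(P_i)-1$ new vertices, so this charges about $(\ell(P_i)+1)/2$ new colors against a budget of $(\ell(P_i)-1)/2$; summed over the $t$ long ears the total is about $2+\frac{(n-n_0)+t}{2}$, which exceeds $\lfloor n/2\rfloor$ by roughly $t/2$. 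In the extremal case $n_0=4$ and $\ell(P_1)=\cdots=\ell(P_t)=2$ your scheme uses $2+t$ colors against a budget of $2+\lfloor t/2\rfloor$, so there is no slack at all to absorb the ``occasional extra colors'' you invoke for endpoint clashes. (A further local inconsistency: using ``each new color on two consecutive edges'' of an ear would make the ear fail to be a proper path.)

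The paper does the opposite: \emph{all} internal edges of every ear are colored from $\{1,2\}$ so that each ear is a proper path, and one fresh color is introduced per \emph{pair} of consecutive ears $(P_{2j-4},P_{2j-3})$ and placed on their four end-edges. This costs $\lceil(t+3)/2\rceil$ colors in total, and since each ear of length at least $2$ adds at least one new vertex, $t\le n-4$ and $\lceil(t+3)/2\rceil\le\lfloor n/2\rfloor$ follows. Putting the fresh color on the end-edges also dissolves the compatibility problem you flag: the end-edge color of $P_t$ does not occur in $G_{t-2}$ at all, so a proper tree inside $G_{t-2}$ can always be extended through an endpoint of $P_t$; the only delicate situation is when $P_{t-1}$ and $P_t$ carry the same fresh color and meet outside $G_{t-2}$, which the paper settles by a short case analysis on $|[V(P_t)\cap V(P_{t-1})]\setminus V(G_{t-2})|$. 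To repair your write-up you would need to redesign the allocation along these lines; as it stands the color count itself is the gap, not merely the endpoint bookkeeping.
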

\begin{proof}
Since $G_t \ (t \ge 2)$ in the nonincreasing ear-decomposition is a 2-connected spanning subgraph of $G$, it only needs to show that $G_t$ has a 3-proper coloring with at most $\lfloor\frac{n}{2}\rfloor$ colors by Proposition \ref{pro2.1}.

Next, we will give an edge-coloring $c$ of $G_t$ using at most $\lfloor\frac{n}{2}\rfloor$ colors. Since $G_1$ is Hamiltonian, It follows from Corollary \ref{cor2.6} that we can color the edges of $G_1$ with two different colors from $\{1,2\}$ such that for every triple of vertices in $G_1$, there exists a proper tree in $G_1$ connecting them. Then, we color the end-edges of $P_{2j-4}$ and $P_{2j-3}$ with fresh color $j$ for $3 \le j \le \lceil\frac{t+3}{2}\rceil$. Finally, we color the internal edges of $P_i \ (2 \le i \le t)$ with two colors from $\{1,2\}$ such that $P_i$ is a proper path if $\ell(P_i) \ge 3$. One can see that we color all the edges of $G_t$ with $\lceil\frac{t+3}{2}\rceil$ colors. Since $n_0+\sum_{i=1}^t(\ell(P_i)-1)=n$ and $n_0 \ge 4$, we have that $\lceil\frac{t+3}{2}\rceil \le \lfloor\frac{n}{2}\rfloor$, the equality holds if and only if $n_0=4$ and $\ell(P_i)=2$.

Now we show that this edge-coloring is a 3-proper coloring of $G_t$. We apply induction on $t$ $(t\geq2)$. If $t=2$, then let $u,v,w$ be any three vertices of $G_2$. If all of $\{u,v,w\}$ are in $G_1$, then there is already a proper tree connecting them in $G_1$. If two of $\{u,v,w\}$ are in $G_1$, without loss of generality, assume that $u \in V(P_2)\setminus \{a_2,b_2\}$, then the proper tree connecting $a_2, v, w$ in $G_1$ together with the proper path $uP_2a_2$ forms a proper $\{u,v,w\}$-tree in $G_2$. If one of $\{u,v,w\}$ is in $G_1$, without loss of generality, assume that $u,v \in V(P_2)\setminus \{a_2,b_2\}$ and $v$ is on the proper path $uP_2a_2$, then the proper tree connecting $a_2,w$ in $G_1$ together with the proper path $uP_2a_2$ forms a proper $\{u,v,w\}$-tree in $G_2$. If none of $\{u,v,w\}$ is in $G_1$, then $\{u,v,w\} \subset V(P_2)\setminus \{a_2,b_2\}$. Thus, there is already a proper path connecting them in $P_2$. Now we assume that this edge-coloring makes $G_i \ (1 \le i \le t-1)$ 3-proper connected. It is sufficient to show that this edge-coloring makes $G_t$ 3-proper connected. For any three vertices $\{u,v,w\}$ of $G_t$, if all of them are in $G_{t-1}$, then there is already a proper tree in $G_{t-1}$ connecting them. If two of $\{u,v,w\}$ are in $G_{t-1}$, without loss of generality, say $u \in V(P_t)\setminus \{a_t, b_t\}$. If $t$ is even, then the color of the end-edges of $P_t$ does not appear in $G_{t-1}$. Thus, the proper tree connecting $a_t, v, w$ in $G_{t-1}$ together with the proper path $uP_ta_t$ forms a proper $\{u,v,w\}$-tree in $G_t$. If $t$ is odd, then the end-edges of $P_{t-1}$ and $P_t$ have the same color which does not appear in $G_{t-2}$. We consider the following two cases.

\textbf{Case 1.} $|[V(P_t) \cap V(P_{t-1})]\setminus V(G_{t-2})| \le 1$.

Without loss of generality, assume that $a_t \in V(G_{t-2})$ and $a_t \ne b_{t-1}$. If both of $v$ and $w$ are in $G_{t-2}$, then the proper tree connecting $a_t, v, w$ in $G_{t-2}$ together with the proper path $uP_ta_t$ forms a proper $\{u,v,w\}$-tree in $G_t$. If $v \in V(G_{t-2})$ and $w \in V(P_{t-1})\setminus \{a_{t-1},b_{t-1}\}$, then the proper tree connecting $a_t, v, b_{t-1}$ in $G_{t-2}$ together with the proper paths $uP_ta_t$ and $wP_{t-1}b_{t-1}$ forms a proper $\{u,v,w\}$-tree in $G_t$. If $v, w \in V(P_{t-1})\setminus \{a_{t-1},b_{t-1}\}$, without loss of generality, assume that $v$ is on the proper path $wP_{t-1}b_{t-1}$. Thus, the proper tree connecting $a_t, b_{t-1}$ in $G_{t-2}$ together with the proper paths $uP_ta_t$ and $wP_{t-1}b_{t-1}$ forms a proper $\{u,v,w\}$-tree in $G_t$.

\textbf{Case 2.} $|[V(P_t) \cap V(P_{t-1})]\setminus V(G_{t-2})| =2$.

One can see that $\ell(P_{t-1}) \ge 3$. Without loss of generality, assume that $a_t$ is on the proper path of $b_tP_{t-1}a_{t-1}$ and $b_t$ is on the proper path of $a_tP_{t-1}b_{t-1}$. If both of $v$ and $w$ are in $G_{t-2}$, then the proper tree connecting $b_{t-1}, v, w$ in $G_{t-2}$ together with the proper path $uP_ta_tP_{t-1}b_{t-1}$ forms a proper $\{u,v,w\}$-tree in $G_t$. If $v \in V(G_{t-2})$ and $w \in V(P_{t-1})\setminus \{a_{t-1},b_{t-1}\}$, without loss of generality, assume that $w$ is on the proper path $a_tP_{t-1}b_{t-1}$, then the proper tree connecting $v, b_{t-1}$ in $G_{t-2}$ together with the proper path $uP_ta_tP_{t-1}b_{t-1}$ forms a proper $\{u,v,w\}$-tree in $G_t$. If $v, w \in V(P_{t-1})\setminus \{a_{t-1},b_{t-1}\}$, without loss of generality, assume that $v$ is on the proper path $a_tP_{t-1}b_{t-1}$. If $w$ is on the proper path $a_tP_{t-1}b_{t-1}$, then the path $uP_ta_tP_{t-1}b_{t-1}$ is a proper path connecting $u,v,w$ in $G_t$. If $w$ is on the proper path $a_tP_{t-1}a_{t-1}$, then the proper tree connecting $a_{t-1}, b_{t-1}$ in $G_{t-2}$ together with the proper paths $uP_ta_tP_{t-1}b_{t-1}$ and $wP_{t-1}a_{t-1}$ forms a proper $\{u,v,w\}$-tree in $G_t$.

If one of $\{u,v,w\}$ is in $G_{t-1}$, then we can easily get a proper $\{u,v,w\}$-tree in $G_{t}$ in a way similar to the situation that two of $\{u,v,w\}$ are in $G_{t-1}$. If none of $\{u,v,w\}$ is in $G_{t-1}$, then $\{u,v,w\} \subset V(P_t)\setminus \{a_t,b_t\}$. Thus, there is also already a proper path in $P_t$ connecting them. Hence, we complete the proof.
\end{proof}


\begin{thebibliography}{1}

\bibitem{ALLZ}
E. Andrews, E. Laforge, C. Lumduanhom, P. Zhang, On proper-path colorings in graphs, {\it J. Combin. Math. Combin. Comput.}, to appear.

\bibitem{BM}
J.A. Bondy, U.S.R. Murty, {\it Graph Theory}, GTM $244$, Springer, $2008$.

\bibitem{BFGMMMT}
V. Borozan, S. Fujita, A. Gerek, C. Magnant, Y. Manoussakis, L. Montero, Zs. Tuza,
Proper connection of graphs, {\it Discrete Math.} {\bf 312} (2012) 2550--2560.

\bibitem{CLZ}
Q. Cai, X. Li, Y. Zhao, The 3-rainbow index and connected dominating sets, {\it J. Comb. Optim.}
{\bf 31(2)} (2016) 1142--1159.

\bibitem{QXY}
Q. Cai, X. Li, Y. Zhao, Note on the upper bound of the rianbow index of a graph, {\it Discrete Appl. Math.}, in press: http://dx.doi.org/10.1016/j.dam.2015.10.019.

\bibitem{CWY}
Y. Caro, D.B. West, R. Yuster, Connected domination and spanning trees with many leaves, {\it SIAM J. Discrete Math.} {\bf13(2)} (2000) 202--211.

\bibitem{CJMZ}
G. Chartrand, G.L. Johns, K.A. McKeon, P. Zhang, Rainbow connection in graphs,  {\it Math. Bohem.} {\bf 133} (2008) 85--98.

\bibitem{COZ}
G. Chartrand, F. Okamoto, P. Zhang, Rainbow trees in graphs and generalized connectivity, {\it Networks} {\bf55} (2010) 360--367.

\bibitem{CLL}
L. Chen, X. Li, J. Liu, The $k$-proper index of graphs, arXiv:1601.06236.

\bibitem{CLYZ}
L. Chen, X. Li, K. Yang, Y. Zhao, The 3-rainbow index of a graph, {\it Discuss. Math. Graph Theory} {\bf 35} (2015) 81--94.

\bibitem{GLQ}
R. Gu, X. Li, Z. Qin, Proper connection number of random graphs, {\it Theoret. Comput. Sci.} {\bf 609(2)} (2016) 336--343.

\bibitem{LLZ}
E. Laforge, C. Lumduanhom, P. Zhang,  Characterizations of graphs having large proper connection numbers, {\it Discuss. Math. Graph Theory} {\bf 36(2)} (2016) 439--453.

\bibitem{LC}
X. Li, C. Magnant, Properly colored notions of connectivity - a dynamic survey, {\it Theory \& Appl. Graphs} {\bf 0}(1) (2015), Art. 2.

\bibitem{LSS}
X. Li, Y. Shi, Y. Sun, Rainbow connections of graphs: A survey, {\it Graphs \&
Combin.}  {\bf 29} (2013) 1--38.

\bibitem{LS}
X. Li, Y. Sun, {\it Rainbow Connections of Graphs}, Springer Briefs in math., Springer, New York, 2012.

\bibitem{LWY}
X. Li, M. Wei, J. Yue, Proper connection number and connected dominating sets, {\it Theoret. Comput. Sci.} {\bf 607} (2015) 480--487.

\bibitem{LSYZ}
X. Li, I. Schiermeyer, K. Yang, Y. Zhao, Graphs with 3-rainbow index $n-1$ and $n-2$, {\it Discuss. Math. Graph Theory} {\bf 35} (2015) 105--120.

\end{thebibliography}
\end{document}